\documentclass[12pt]{QT}

\usepackage{graphicx}
\usepackage{amsmath}
\usepackage{amscd}
\usepackage{amssymb}
\usepackage{verbatim}
\usepackage[mathscr]{eucal}
\usepackage[all]{xy}
\usepackage{setspace}
\usepackage{psfrag}
\usepackage{url}
\usepackage{pinlabel}
\usepackage{float}

\usepackage[colorlinks=true,linkcolor=blue,citecolor=blue,urlcolor=blue]{hyperref}

\address[ina@rice.edu]{Ina Petkova, Department of Mathematics, Rice University, MS-136, Houston, TX 77005}

\title[Cables of thin knots]{Cables of Thin Knots and Bordered Heegaard Floer Homology}
\author{Ina Petkova}

\theoremstyle{plain}
\newtheorem{theorem}{Theorem}

\newtheorem{lemma}[theorem]{Lemma}
\newtheorem{corollary}[theorem]{Corollary}

\newtheorem{defn}[theorem]{Definition}

\def\note{{\bf {\bigskip}{\noindent}Note: }}

\def\ackn{{\bf {\bigskip}{\noindent}Acknowledgments. }}

\def\bar{\overline}

\newcommand{\Z}{\mathbb Z}
\newcommand{\R}{\mathbb R}

\newcommand{\F}{\mathbb F}

\newcommand{\cf}{\mathit{CF}}
\newcommand{\hf}{\mathit{HF}}
\newcommand{\cfd}{\mathit{CFD}}
\newcommand{\cfa}{\mathit{CFA}}
\newcommand{\cfk}{\mathit{CFK}}
\newcommand{\hfk}{\mathit{HFK}}
\newcommand{\cfhat}{\widehat{\cf}}
\newcommand{\hfhat}{\widehat{\hf}}
\newcommand{\cfdhat}{\widehat{\cfd}}
\newcommand{\cfahat}{\widehat{\cfa}}
\newcommand{\cfkhat}{\widehat{\cfk}}
\newcommand{\hfkhat}{\widehat{\hfk}}
\newcommand{\cfkm}{\cfk^-}
\newcommand{\hfkm}{\hfk^-}
\newcommand{\cfam}{\cfa^-}

\newcommand{\h}{\langle h \rangle }
\newcommand{\g}{\langle g \rangle }

\newcommand{\HH}{\mathcal{H}}

\newcommand{\zz}{\mathcal Z}

\newcommand{\bdy}{\partial}

\begin{document}

\maketitle

\begin{abstract}
We use bordered Floer homology to give a formula for $\hfkhat(K_{p, pn+1})$ of any $(p, pn+1)$-cable of a thin knot $K$ in terms of $\Delta_K(t)$, $\tau(K)$, $p$, and $n$. We also give a formula for the Ozsv\'ath-Szab\'o concordance invariant $\tau(K_{p, q})$ in terms of $\tau(K)$, $p$, and $q$, for all relatively prime $p$ and $q$.
\end{abstract}

\begin{classification}
57M27, 57R58.
\end{classification}

\begin{keywords}
Knots, knot satellites, concordance, knot Floer homology.
\end{keywords}


\section{Introduction}\label{intro}

In \cite{hfk}, Ozsv\'ath and Szab\'o introduce a powerful knot invariant using Heegaard diagrams. In this paper, we study its simplest version, the knot Floer homology $\hfkhat(K)$, which has the structure of a bigraded vector space over $\F_2$, the field with two elements. Its Euler characteristic is the symmetrized Alexander polynomial $\Delta_K(T)$, in the sense that
$$\sum_{i,j}  (-1)^i T^j  \textrm{ rank } \hfkhat_i(K, j)= \Delta_K(T).$$
The indices $i$  and $j$ in the summation stand for the \emph{Maslov grading} $M$ and the \emph{Alexander grading} $A$, respectively. It is sometimes convenient to make use of a third grading, $\delta = A-M$.

Originally, knot Floer homology was defined by counting pseudo-holomorphic curves in the $g$-fold symmetric product of a genus $g$ Heegaard surface. Later, combinatorial versions appeared, including a method using grid diagrams \cite{mos}. The complex coming from a grid diagram has $n!$ generators, where $n$ is the arc index of the knot, so this method only works well in practice for knots with few crossings or for special families of knots. In this paper, we instead use bordered Floer homology, which generalizes Heegaard Floer homology to $3$-manifolds with boundary, and to knots in $3$-manifolds with boundary \cite{bfh2}. The beauty of this theory is that it allows us to compute invariants for a space by cutting the space into simpler pieces, and studying the pieces and their gluing instead. This approach is particularly well suited for studying knot satellites. It was used by Levine to study generalizations of  Bing and Whitehead doubles \cite{lev9, lev10}. Here, we apply the bordered method to cables of thin knots. Our Corollary \ref{pq} has  since been generalized by Hom to cables of all knots \cite{homthesis}.

Let $K$ be a knot in $S^3$. Recall that the $(p,q)$-cable of $K$, denoted $K_{p,q}$, is the satellite knot with pattern the torus knot $T_{p,q}$ and companion $K$. In other words, if $T_{p,q}$ is drawn on the surface of an unknotted solid torus, then we obtain  $K_{p,q}$ by gluing the solid torus to the complement of $K$, identifying its meridian and preferred longitude with the meridian and preferred longitude of $K$.  Thus, $p$ and $q$ refer to the winding of the cable in the longitudinal and meridional directions of $K$, respectively. 

A knot $K$ is called \emph{Floer homologically thin} \cite{qalinks} if its knot Floer homology is supported in a single $\delta$-grading. Throughout this paper we will say \emph{thin} to mean Floer homologically thin. If the homology is supported on the diagonal $\delta = -\sigma/2$, where $\sigma$ denotes the knot signature, then we say the knot is \emph{$\sigma$-thin}, or  \emph{perfect} \cite{jr2br}. The class of $\sigma$-thin knots contains as a proper subset all
 quasi-alternating knots \cite{qalinks}, and in particular all alternating knots \cite{alt}.
 
Using the knot filtration on $\cfhat(S^3)$, Ozsv\'ath and Szab\'o define an integer knot invariant $\tau$ \cite{tau}, independently discovered by Rasmussen \cite{jrth}, whose absolute value is a lower bound on the four-ball genus. The behavior of $\tau$ under various satellite operations, such as cabling, Bing, and Whitehead doubling, has been studied extensively in recent years \cite{mc2, ln, cvc, lr9, lr9e, lev9, lev10}. In \cite{mc2}, Hedden gives upper and lower bounds for $\tau(K_{p, pn+1})$ in terms of $\tau(K)$, $p$, and $n$, and, for sufficiently large $|n|$, describes the knot Floer homology of the cable in the topmost Alexander gradings. In the case where $K$ is thin, we extend Hedden's results to a complete description of the knot Floer homology of the cable.
In particular, we derive a formula for $\hfkhat(K_{p, pn+1})$ and for $\tau(K_{p, pn+1})$ in terms of $\tau(K)$, $\Delta_K(t)$, $p$, and $n$. Note that for a $\sigma$-thin knot, $\tau(K) = -\sigma(K)/2$.

\begin{theorem}\label{main}
Suppose $K$ is a thin knot, and fix integers $p>1$ and $n$. Then $\tau(K)$, $\Delta_K(t)$, $p$, and $n$  determine the associated graded complex to $\cfkm(K_{p, pn+1})$, and in particular $\hfkhat(K_{p, pn+1})$. The complete description is given in Section \ref{tensor}. Further,
\begin{displaymath}
\tau(K_{p, pn+1}) = \left\{ \begin{array}{ll}
p\tau(K)+\frac{np(p-1)}{2} \quad & \textrm{if  $\tau(K)=0$ and $n\geq 0$, or if $\tau(K)>0$}\\
 p\tau(K)+\frac{np(p-1)}{2}+p-1 & \textrm{otherwise.}
\end{array} \right.
\end{displaymath}
\end{theorem}

We prove Theorem \ref{main} using the Pairing Theorem $11.21$ of \cite{bfh2}. 
Our method can easily be adapted to compute $\hfkhat(K_{p,q})$ for any relatively prime $p$ and $q$, as we explain at the end of Section \ref{cfa}.

We are grateful to Cornelia Van Cott for pointing out the following:

\begin{corollary}[Van Cott]\label{pq} 
Suppose $K$ is a thin knot, and $p$ and $q$ are relatively prime integers, with $p>0$. Then
\begin{displaymath}
\tau(K_{p,q}) = \left\{ \begin{array}{ll}
p\tau(K)+\frac{(p-1)(q-1)}{2} \quad & \textrm{if $\tau(K)=0$ and $q > 0$, or if $\tau(K)>0$}\\
p\tau(K)+\frac{(p-1)(q+1)}{2}& \textrm{otherwise.}
\end{array} \right.
\end{displaymath}
\end{corollary}

Note that since $K_{-p,-q} = -K_{p,q}$, where $-K_{p,q}$ is $K_{p,q}$ with reversed orientation, and since $\tau$ does not distinguish orientation, the result of Corollary \ref{pq} extends to all one-component cables of thin knots. 

Theorem \ref{main} and Corollary \ref{pq}, combined with the inequality $|\tau| \leq g_4$, provide  information about the four-ball genus of cables. For example

\begin{corollary}\label{genus} 
Suppose $K$ is a thin knot with $ g_4(K) = \tau(K) $,  $p>1$ is an integer, and $q>0$ is an integer relatively prime to $p$. Then $g_4(K_{p,q}) = \tau(K_{p,q})$. 
\end{corollary}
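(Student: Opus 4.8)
The plan is to deduce Corollary \ref{genus} directly from the $\tau$-formulas in Theorem \ref{main} and Corollary \ref{pq}, together with the standard inequalities relating $\tau$ and the four-ball genus, and a known additivity/subadditivity property of $g_4$ under the cabling/satellite operation. The guiding principle is that we already control $\tau(K_{p,q})$ exactly, and we know $|\tau(K_{p,q})| \le g_4(K_{p,q})$; so it suffices to produce a matching \emph{upper} bound for $g_4(K_{p,q})$.

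First I would record the hypotheses: $K$ is thin, $g_4(K) = \tau(K)$ (so in particular $\tau(K) \ge 0$; if $\tau(K) = 0$ then $K$ is slice), $p > 1$, and $q > 0$ with $\gcd(p,q) = 1$. Since $q > 0$ and $\tau(K) \ge 0$, Corollary \ref{pq} gives $\tau(K_{p,q}) = p\,\tau(K) + \tfrac{(p-1)(q-1)}{2}$, which is nonnegative, so $\tau(K_{p,q}) = |\tau(K_{p,q})| \le g_4(K_{p,q})$. It remains to show $g_4(K_{p,q}) \le p\,\tau(K) + \tfrac{(p-1)(q-1)}{2}$. The natural source of such a bound is a genus estimate for cables: the $(p,q)$-cable of $K$ bounds a surface in $B^4$ built from $p$ parallel pushed-in copies of a genus-$g_4(K)$ surface for $K$, joined by bands realizing the torus-knot pattern $T_{p,q}$ inside the solid torus. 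This yields the inequality
\begin{equation}\label{eq:cablegenus}
g_4(K_{p,q}) \;\le\; p\,g_4(K) + g_4(T_{p,q}),
\end{equation}
where $g_4(T_{p,q})$ is the four-ball (equivalently, by Kronheimer--Mrowka, the Seifert) genus of the torus knot, namely $\tfrac{(p-1)(q-1)}{2}$ for $q > 0$. (Inequality \eqref{eq:cablegenus} is folklore; a clean way to see it is that a Seifert-type cobordism for the pattern in the solid torus, capped with $p$ copies of a slice surface for the companion, gives a surface in $B^4$ of the stated genus.)

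Combining, for $q > 0$ we get
$$
g_4(K_{p,q}) \;\le\; p\,g_4(K) + \frac{(p-1)(q-1)}{2} \;=\; p\,\tau(K) + \frac{(p-1)(q-1)}{2} \;=\; \tau(K_{p,q}) \;\le\; g_4(K_{p,q}),
$$
using the hypothesis $g_4(K) = \tau(K)$ in the middle equality. Hence all terms are equal and $g_4(K_{p,q}) = \tau(K_{p,q})$, as claimed. The case $\tau(K) = 0$ is already covered since then $K$ is slice, $g_4(T_{p,q}) = \tfrac{(p-1)(q-1)}{2}$, and the same chain of inequalities applies; one could also note $K_{p,q}$ is then concordant to $T_{p,q}$, whose $\tau$ equals its genus.

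The only genuine obstacle is justifying the cable genus bound \eqref{eq:cablegenus} cleanly and with the correct constant $g_4(T_{p,q}) = \tfrac{(p-1)(q-1)}{2}$ for $q>0$; everything else is bookkeeping with the formulas already established. I would either cite the standard satellite genus inequality or include the one-paragraph band-and-cap construction above, and I would invoke Kronheimer--Mrowka's computation of the four-ball genus of torus knots to identify $g_4(T_{p,q})$. With that in hand the corollary follows by the displayed squeeze.
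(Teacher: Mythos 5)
Your proposal is correct and follows essentially the same route as the paper: use Corollary \ref{pq} to compute $\tau(K_{p,q})$, bound $g_4(K_{p,q})$ from above by $p\,g_4(K)+\frac{(p-1)(q-1)}{2}$ via the band-and-cap cable surface, and squeeze against $\tau(K_{p,q})\le g_4(K_{p,q})$. The paper likewise justifies the upper bound with the same one-sentence surface construction, so your level of detail matches the original.
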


\note  A \emph{Mathematica} \cite{m} program implementing this method is available online   \cite{prog}. The program takes $\Delta_K(t)$, $\tau(K)$, $p$, and $n$ as input, and outputs the generators of $\hfkhat(K_{p, pn+1})$ as a list of ordered pairs of  Alexander and Maslov gradings. It then plots the result on  the $(A, M)$-lattice. The program computes $\hfkhat$ for cables with thousands of crossings in a matter of seconds, whereas the grid method would take billions of years. In Section \ref{ex} we give the result for the $(5, 16)$-cable of the knot $11n50$. This knot is interesting as it is the first known example of a homologically thin (with respect to $\hfkhat$, $\bar{\mathit{Kh}}$ and $\bar{\mathit{Kh}}'$), non-quasi-alternating knot \cite{nonqa}. 

 \ackn I wish to thank my advisor Peter Ozsv\'ath for suggesting this problem, and for his guidance. I am also thankful to Jon Bloom and Robert Lipshitz for many helpful conversations, and to Cornelia Van Cott for pointing out Corollary \ref{pq}. Last,  I thank the referees for their corrections and suggestions.


\section{Preliminaries on bordered Floer homology}\label{prelim}

We review the theory of bordered Floer homology, focusing on the special case of torus boundary, in  particular gluing a knot in the solid torus to a knot complement. For details and the more general theory, we refer the reader to \cite{bfh2}.

Let $Y$ be a closed oriented $3$-manifold, and let $F$ be a connected, oriented surface that separates $Y$ into two manifolds with boundary, $(Y_1, F)$ and $(Y_2, -F)$. Fix a Heegaard diagram $\HH = (\Sigma_g, \{\alpha_1, \ldots, \alpha_g\},  \{\beta_1, \ldots, \beta_g\}, z)$ for $Y$, and assume that $F$ intersects $\HH$ in a circle $\zz$ that contains the basepoint $z$ and crosses $2k$ $\alpha$-curves, twice each, and no $\beta$-curves. Assume also that no isotopies can be made to decrease the number of intersections with the $\alpha$-curves, and that $g(F) = k$. We call such $\zz$ a \emph{pointed matched circle}. Two points are matched if they belong to the same $\alpha$-arc. The circle $\zz$ separates $\HH$ into the two \emph{bordered Heegaard diagrams} $\mathcal H_1$ and $\mathcal H_2$, representing the two manifolds with boundary. The parametrization of the boundary is specified by $\bdy \mathcal H_1 = \zz$ or $\bdy \mathcal H_2 = -\zz$ respectively.

To a pointed matched circle $\zz$ we associate an $\mathcal A_{\infty}$-algebra $\mathcal A(\zz)$ over $\F_2$. To a bordered Heegaard diagram $(\mathcal H, z)=(\bar \Sigma, \boldsymbol{\bar\alpha}, \boldsymbol{\beta}, z)$, we associate either a left type $D$ structure $\cfdhat (\mathcal H, z)$ over $\mathcal A(-\bdy\mathcal H)$, or a right $\mathcal A_{\infty}$-module $\cfahat (\mathcal H, z)$ over $\mathcal A(\bdy\mathcal H)$. Similarly, we can represent a knot in a bordered $3$-manifold by a doubly-pointed bordered Heegaard diagram $(\mathcal H, z, w) = (\bar \Sigma, \boldsymbol{\bar\alpha}, \boldsymbol{\beta}, z, w)$, where $z$ and $w$ are in $\bar\Sigma\setminus(\boldsymbol{\bar\alpha}\cup \boldsymbol{\beta})$, and  $z\in \bdy \mathcal H$. To this diagram we can associate a right $\mathcal A_{\infty}$-module $\cfam(\mathcal H, z, w)$, this time over $\F_2[U]$, where a holomorphic curve passing through $w$ with multiplicity $n$ contributes $U^n$ to the multiplication. Setting $U=0$ gives  $\cfahat(\mathcal H, z, w)$, where we count only holomorphic curves that do not cross $w$.

From here on, let $Y_{p,1}$ stand for the $(p,1)$-cable in the $0$-framed solid torus, and let $Y_{K, n}$ be the $n$-framed knot complement $S^3\setminus K$, so that $Y_{p,1}\cup_{\bdy}Y_{K, n}$ is the pair $(S^3, K_{p, pn+1})$.  The separating surface $F = \bdy Y_{p,1}  = -\bdy Y_{K,n}$ is a torus, parametrized by the circle $\zz$ in Figure \ref{fig:zc}. The four $\alpha$-points divide the circle into the four upward-oriented arcs $\rho_0$, $\rho_1$, $\rho_2$, and $\rho_3$, where $\rho_0$ contains the basepoint $z$. The $\mathcal A_{\infty}$-algebra $\mathcal A$  is just a graded algebra; it has two idempotents, one for each $\alpha$-arc, and $6$ Reeb elements, coming from the Reeb chords $\rho_1, \rho_2$ and $\rho_3$ (see  \cite[Section 11.1]{bfh2}).  In  this case, $\cfdhat(Y_{K,n})$ can be derived explicitly from $\cfkm(K)$ (see Section \ref{dfrom-}), and is represented best using the coefficient maps $D_1, D_2, D_3, D_{12}, D_{23}$, or  $D_{123}$, which describe the sequence of Reeb chords that a holomorphic curve passes through. 

When at least one of $\cfa$ or $\cfd$ is bounded \cite[Definititions 2.4 and 2.22]{bfh2}, there is a particularly simple description for their $\mathcal A_{\infty}$ tensor product and the tensor differential. A product $a\boxtimes d$ is nonzero in $\cfa\boxtimes \cfd$ whenever $a$ and $d$ occupy complementary sets of $\alpha$-arcs.  The differential  $\bdy^{\boxtimes}(a_1\boxtimes d_1)$ has $a_2\boxtimes d_2$ in the image  whenever there is a sequence of coefficient maps $D_{I_1}, \ldots, D_{I_n}$ from $d_1$ to $d_2$ and a multiplication map $m_{n+1}(a_1, \rho_{I_1}, \ldots, \rho_{I_n})$ with $a_2$ in the image, both indexed the same way. See  \cite[Definition 2.26 and Equation (2.29)]{bfh2}.

\psfrag{a1}{$\alpha_1$}
\psfrag{a2}{$\alpha_2$}
\psfrag{r0}{$\rho_0$}
\psfrag{r1}{$\rho_1$}
\psfrag{r2}{$\rho_2$}
\psfrag{r3}{$\rho_3$}
\psfrag{zz}{$z$}
\psfrag{z}{$^z$}

\begin{figure}[t]
\centering
 \includegraphics[scale=.5]{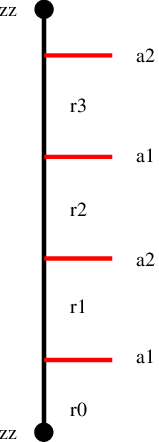}
  \caption{The circle $\zz$ in the case of torus boundary}
  \label{fig:zc}
\end{figure}

It turns out there is no $\Z$-grading on  $\mathcal A$, $\cfa$ or $\cfd$. Instead, the algebra and  domains are graded by a nonabelian group $G$, and the left or right modules over the algebra are graded by left or right cosets of  a subgroup of $G$. That subgroup is the image of periodic domains in $G$.  For a general discussion of gradings, see \cite[Chapter 10]{bfh2}. In our case, the elements of $G$ are quadruples of half-integers $(a; b,c; d)$ with $b+c\in \Z$ and $d\in \Z$, with multiplication given by
$$(a_1; b_1, c_1; d_1)\cdot(a_2; b_2, c_2; d_2) = (a_1+ a_2+ \Big|\begin{array}{cc}b_1 & c_1\\ b_2 & c_2\end{array}\Big|; b_1+ b_2, c_1+c_2; d_1+d_2).$$
The first number is called the Maslov component of the grading, and the pair $(b,c)$ is the $spin^c$ component. The fourth number is used in the case of knots to encode the $U$ grading.

The grading on  $\mathcal A$ is given by 
\begin{align*}
gr(\rho_1) & = (\textstyle -\frac{1}{2}; \frac{1}{2}, -\frac{1}{2}; 0)\\
gr(\rho_2) & = (\textstyle -\frac{1}{2}; \frac{1}{2}, \frac{1}{2}; 0)\\
gr(\rho_3) & = (\textstyle -\frac{1}{2}; -\frac{1}{2}, \frac{1}{2}; 0).
\end{align*}

For a homology solid torus, hence for both $Y_{p,1}$ and $Y_{K, n}$, the group of periodic domains is isomorphic to $\Z$, and so is its image in $G$.

For $\cfd(Y_{K,n})$ we find a generator $h$ for this image in Section \ref{dfrom-}. If $D_I$ is a coefficient map from $x$ to $y$ then the gradings of $x$ and $y$ are related by
\begin{equation}\label{eqn:cfd}
gr(y) = \lambda^{-1}gr(\rho_I)^{-1}gr(x)\in G/\h,
\end{equation}
where $\lambda = (1; 0, 0; 0)$. 

For $\cfa(Y_{p,1})$, we find a generator $g$ for the subgroup in Section \ref{cfa}. For a multiplication map $m_{l+1} (x, \rho_{I_1}, \ldots, \rho_{I_l}) = U^i y$ we have the formula
\begin{equation}\label{eqn:cfa}
gr(y) = \lambda^{l-1}gr (x)gr(\rho_{I_1})\cdots gr(\rho_{I_l})(0; 0,0; i)\in \g\backslash G.
\end{equation}

Following the notation in \cite{bfh2}, we denote the associated graded objects to $\cfkm$ and $\cfkhat$ by $g\cfkm$ and $g\cfkhat$, and the homologies of $g\cfkm$ and $g\cfkhat$ by $\hfkm$ and $\hfkhat$.
Let $\HH$ be a provincially admissible Heegaard diagram for $Y_{p,1}$ with $\bdy\HH = \zz$. While $\cfam(\HH)$ may not be an invariant of $Y_{p,1}$(Remark $11.20$ of \cite{bfh2}), the pairing theorem  \cite[Theorem 11.21]{bfh2} says that there are homotopy equivalences
\begin{align*}
g\cfkm(K_{p, pn+1})&\simeq \cfam(\HH)\boxtimes \cfdhat(Y_{K,n})\\
g\cfkhat(K_{p, pn+1})&\simeq \cfahat(\HH)\boxtimes \cfdhat(Y_{K,n}),
\end{align*}
which respect gradings in the following sense. 
The tensor product is graded by the double-coset space $\g\backslash G/\h$ via $gr(xy) = gr(x)gr(y)$ (we use the notation $xy$ to mean $x\boxtimes y$).
This double-coset space is in turn isomorphic to $\Z \times \Z$, and for a homogeneous $xy$ we can always choose a coset representative for $gr(xy)$ of the form $(a; 0, 0; d)$, where $a,d\in \Z$. We can achieve this by multiplying any other representative by appropriate powers of $g$ to the left and $h$ to the right. From there, we recover the absolute Maslov and Alexander grading by the formula
\begin{align*}
A &= d - p\tau(K) - \frac{np(p-1)}{2}\\
M &= a+ 2A.
\end{align*}
We discuss this formula  in Section \ref{tensor}.


\section{ \texorpdfstring{$\cfdhat$ of a thin knot}{CFD of a thin knot}}

\subsection{The complex $\cfkm$ for thin knots}\label{basis}

Recall that given a knot $K$ in $S^3$, $\cfkm(K)$ is a free, finitely generated chain complex over $\F_2 [U]$, endowed with an Alexander filtration $A$ by the integers, and an integer grading, called the Maslov grading. The differential lowers the Maslov grading by one, respects the Alexander filtration, and  does not decrease the $U$ power. We can illustrate $\cfkm(K)$ graphically as follows. We choose a basis of  generators $\mathcal B$ for $\cfkm(K)$ over $\F_2[U]$ which is homogeneous with respect to the Alexander filtration. Then $\mathcal B\otimes_{\F_2} \F_2[U]$ is a basis for $\cfkm(K)$ over $\F_2$.
We plot $\mathcal B\otimes \F_2[U]$ on  the $(U, A)$-lattice, and draw arrows for the differential $\bdy_-$. To match preexisting conventions, a generator of the form $U^x\xi$  of Alexander depth $y$ is at position $(-x,y)$, where $\xi \in \mathcal B$.


If $\bdy_-(x) = y_1+\cdots + y_n$, where $x, y_1, \ldots, y_n$ are elements of $\mathcal B\otimes_{\F_2} \F_2[U]$, then there is an arrow from $x$ to each $y_i$. In this case we say that $x$ \textit{points} to each $y_i$. If $y_i$   is below/to the left of $x$, we say that the arrow from $x$ to $y_i$ points \textit{down}/\textit{to the left}. Note that all arrows point non-strictly down and to the left. If the arrow is \emph{vertical}, meaning that $x$ and $y_i$ have the same $U$ power, then the \emph{length} of the arrow is $A(x)-A(y_i)$. If the arrow is \emph{horizontal}, meaning that $x$ and $y_i$ are in the same Alexander filtration, then the \emph{length} of the arrow is the difference between the $U$ power of $y$ and the $U$ power of $x$.

From now on, $K$ will be a thin knot. In this case, $\Delta_K(t)$ and $\tau(K)$ are sufficient to describe a model for the chain complex $\cfkm(K)$. Note that for a $\sigma$-thin knot, this means that the only information we need is  the Alexander polynomial and the signature. This was stated without proof in \cite{alt} with regard to alternating knots. We now state and prove the general claim.

\begin{theorem} \label{squares}
If $K$ is a  thin knot, $\cfkm(K)$ is completely determined by $\tau(K)$ and $\Delta_K(t)$.
\end{theorem}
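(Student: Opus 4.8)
## Proof Proposal

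The plan is to show that the filtered chain homotopy type of $\cfkm(K)$ for a thin knot is, up to a well-understood normalization, a direct sum of ``standard pieces'' whose multiplicities and positions on the $(U,A)$-lattice are dictated entirely by $\Delta_K(T)$ and $\tau(K)$. The starting point is the classification of the possible local structure of the differential. Since $K$ is thin, every generator lies on a single $\delta$-line, so $\bdy_-$ shifts $\delta = A - M$ by exactly $-1$ in the sense appropriate to the filtered complex, which forces every arrow in the $(U,A)$-diagram to have length exactly one: a vertical arrow drops the Alexander grading by one, a horizontal arrow raises the $U$-power by one, and a diagonal arrow does both. This rigidity is what makes a thin complex so constrained compared to the general case.

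Next I would invoke the general reduction algorithm for filtered complexes over $\F_2[U]$ (cancellation of differentials): one may cancel any arrow of length one without changing the filtered chain homotopy type, as long as one does not cancel the ``vertical homology'' generator that detects $\tau$. Running this cancellation to exhaustion on a thin complex, the key combinatorial claim is that what remains is a direct sum of: (i) a single ``staircase'' complex of length $2|\tau|$ (a zig-zag of alternating vertical and horizontal length-one arrows, placed so that its vertical homology sits in Alexander grading $\tau(K)$ — this is exactly the piece forced by $\tau$, mirroring the torus-knot staircase), and (ii) a number of ``box'' complexes, each a square with two generators at top, two at bottom, connected by two vertical and two horizontal length-one arrows, contributing four to the rank of $\hfkhat$ in a $2\times 2$ block and contributing nothing to homology. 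The number of boxes is then pinned down by Euler characteristic: $\Delta_K(T)$ records $\sum (-1)^M T^A \operatorname{rank}\hfkhat_M(K,A)$, the staircase accounts for a fixed contribution determined by $\tau$, and each box contributes a term $-T^{j+1} + 2T^j - T^{j-1} = -T^{j-1}(T-1)^2$ for some $j$; matching coefficients determines how many boxes sit at each Alexander grading. Concretely, writing $a$ for the sum of absolute values of the coefficients of $\Delta_K(T)$, the staircase uses $2|\tau|+1$ generators and the boxes use the remaining $a - 2|\tau| - 1$ generators, four per box, which is why $s = (a - 2|\tau| - 1)/4$ is forced to be a nonnegative integer. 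Finally one passes from $\cfkhat$-ranks back to $\cfkm$ by the standard observation that the $U$-module structure of $\cfkm$ is determined (for this restricted class of summands) by the diagram: a box becomes a $U$-box, a staircase becomes the usual torus-knot-type $\cfkm$-staircase, and there is additionally a free $\F_2[U]$ summand whose rank and grading are fixed by $\tau$.

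I expect the main obstacle to be proving that the cancellation algorithm really does terminate at a diagram consisting only of disjoint staircases and boxes — i.e., ruling out more complicated connected configurations (longer zig-zags, trees, squares sharing an edge, arrows whose cancellation would be blocked by the symmetry $\cfkm(K) \simeq \cfkm(K)^*$ up to grading shift). The argument for this uses the length-one rigidity together with the $d^2 = 0$ relation and the filtered symmetry: one shows that after cancelling all ``excess'' vertical and horizontal arrows, any remaining connected component with more than one generator must be either a single square (a box) or the unique staircase whose existence is forced because $\tau$ is nonzero, and that these cannot be further reduced. This is essentially the thin-complex structure theorem; in the perfect case it is well known and due to work surrounding quasi-alternating knots, and for general thin $K$ the same proof applies since only the $\delta$-grading and $\tau$ enter. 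Once this structural statement is in hand, the dependence on $\Delta_K(T)$ and $\tau(K)$ alone is immediate from Euler characteristic bookkeeping, and the normalization constant (the Maslov grading of the staircase's homology, fixed by $\tau$) completes the determination of $\cfkm(K)$.
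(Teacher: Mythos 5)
Your proposal follows essentially the same route as the paper: thinness forces every arrow in the $(U,A)$-picture to be a length-one vertical or horizontal arrow, the resulting complex splits as one staircase (pinned down by $\tau$) plus squares, and the multiplicities of the squares are recovered grading-by-grading from the coefficients of $\Delta_K$. The one step you gesture at rather than prove --- that a thin complex with vertical and horizontal homology of rank one decomposes into exactly these model pieces --- is where the paper does its real work, by induction on the $\F_2[U]$-rank, splitting off one summand at a time through explicit basis changes using only $\bdy^2=0$ and the rank-one homology hypothesis (the $z\leftrightarrow w$ symmetry you invoke is not needed).
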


The proof relies on two lemmas. First we perform a filtered chain homotopy to obtain a new complex with a simpler differential. Then we change basis to show that the complex is isomorphic to a direct sum of three special kinds of complexes.

\begin{lemma}
There is a filtered chain homotopy equivalence $$(\cfkm(K), \bdy_-)\cong (\hfkhat(K)\otimes \F_2[U], \bdy_z+U\bdy_w),$$
where $\bdy_z$ counts holomorphic disks that pass once through the basepoint $z$, and $\bdy_w$ counts disks that pass once through $w$.
\end{lemma}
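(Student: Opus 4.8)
The plan is to start from the full knot Floer chain complex $\cfkm(K)$, defined by counting holomorphic disks with respect to the basepoints $z$ and $w$, and reduce it by a standard filtered-chain-homotopy argument to a complex whose underlying vector space is the homology $\hfkhat(K) \otimes \F_2[U]$ and whose differential splits into the piece coming from $z$-disks and $U$ times the piece coming from $w$-disks. First I would recall that setting $U = 0$ in $\cfkm(K)$ gives $\cfkhat(K)$ with differential $\bdy_z$, and the associated graded of the Alexander filtration on this is $\hfkhat(K)$. The standard reduction technique (the "cancellation lemma" / reduction lemma for filtered complexes over $\F_2[U]$, cf. the reduction algorithm in \cite{bfh} and the original sources on knot Floer homology) lets me cancel every differential arrow that strictly drops the homological grading without changing the Alexander filtration level, i.e. every "short" vertical arrow. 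Performing all such cancellations produces a filtered-homotopy-equivalent complex whose generators are in bijection with a basis for $\hfkhat(K)$.

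The key steps, in order: (1) Observe $\cfkm(K)$ is a free finitely generated $\F_2[U]$-complex with the Alexander filtration and Maslov grading as described in Section \ref{basis}; decompose its differential as $\bdy_- = \bdy_z + U\bdy_w + (\text{higher }U\text{-power terms})$, where $\bdy_z$ preserves the Alexander filtration level and $\bdy_w$ drops it, keeping track of how the $(U,A)$-lattice picture encodes arrow "length." (2) Work over $\F_2[U]$ and apply the reduction lemma to cancel, one at a time, all length-zero vertical components of $\bdy_z$ (differentials that are isomorphisms on a rank-one $\F_2[U]$-summand). Each cancellation changes the complex by a filtered chain homotopy equivalence and may introduce new longer arrows, so this must be iterated; finiteness of the generating set guarantees termination. (3) After this reduction the $U^0$-part of the differential restricted to filtration level has no components, so the remaining generators represent a basis of $H_*(\cfkhat(K), \bdy_z) \cong \hfkhat(K)$, and the underlying $\F_2[U]$-module is $\hfkhat(K)\otimes\F_2[U]$. (4) Identify the surviving differential: the filtration-preserving part is $\bdy_z$ on $\hfkhat(K)$ (which is now zero on associated graded, but I should be careful that after reduction it is literally zero, or else fold any residual $\bdy_z$ into the statement as written), and the remaining part has positive drop in the Alexander filtration; grouping by $U$-power and using that $\bdy_-$ does not increase $U$-power and lowers Maslov grading by one, together with the symmetry and grading constraints for a thin knot, forces the surviving piece to be exactly $U\bdy_w$ with $\bdy_w$ of Maslov degree consistent with a $w$-disk. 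This yields the claimed equivalence $(\cfkm(K),\bdy_-) \cong (\hfkhat(K)\otimes\F_2[U],\, \bdy_z + U\bdy_w)$.

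The main obstacle is step (4): after the formal reduction I only know abstractly that the differential on $\hfkhat(K)\otimes\F_2[U]$ has an Alexander-filtration-preserving part and various higher-order parts, and I must argue that there are no surviving terms of $U$-power $\geq 2$ and that the $U^1$-part indeed deserves to be called $\bdy_w$. The clean way to see this is to run the reduction internally to the flip map / over $\F_2[U,U^{-1}]$ and exploit the symmetry $\cfkm(K)$ enjoys under interchanging $z$ and $w$, together with the thinness hypothesis which pins down the $\delta$-grading and hence constrains which lattice moves are possible; thinness makes the bookkeeping tractable because all generators lie on a single diagonal, so an arrow's horizontal and vertical lengths are linked. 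I would also need to cite the fact (from the construction of $\cfkm$) that $\bdy_-^2 = 0$ splits appropriately into $\bdy_z^2 = 0$, $\bdy_z\bdy_w + \bdy_w\bdy_z = 0$, etc., which is exactly the structure of the target complex. With these symmetry and grading inputs the identification of the reduced differential with $\bdy_z + U\bdy_w$ is forced, completing the proof.
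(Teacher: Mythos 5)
Your overall strategy --- cancel the components of the differential that preserve the position on the $(U,A)$-lattice so as to reduce to a complex with underlying module $\hfkhat(K)\otimes\F_2[U]$, then use thinness to pin down the surviving differential --- is the same as the paper's. Two corrections to the first half, though. The arrows being cancelled are not ``components of $\bdy_z$'': by the definition in the statement, $\bdy_z$ counts disks passing once through $z$, which strictly drop the Alexander filtration, whereas the arrows to cancel are those counting disks missing both basepoints, i.e.\ the differential $\hat\bdy$ on the associated graded object. Correspondingly, the surviving generators form a basis of the homology of the associated graded complex, which is $\hfkhat(K)$ --- not of $H_*(\cfkhat(K),\bdy_z)\cong\hfhat(S^3)$, which has rank one.

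The genuine gap is your step (4), which you correctly flag as the main obstacle but then only gesture at, proposing a detour through the $z\leftrightarrow w$ symmetry and the flip map. No such machinery is needed; the point is a two-line grading count, and it is exactly where thinness enters. Suppose the reduced differential has a component from $x$ to $U^l y$ (with $x,y$ generators of $\hfkhat(K)$ and $l\geq 0$) that drops the Alexander filtration by $k\geq 0$. Thinness gives $M(x)-M(y)=A(x)-A(y)=k-l$, while the facts that the differential drops the Maslov grading by $1$ and that multiplication by $U$ drops it by $2$ give $1=M(x)-M(U^l y)=(k-l)+2l=k+l$. Hence $(k,l)=(0,1)$ or $(1,0)$: every surviving arrow is either a horizontal length-one arrow (a term of $U\bdy_w$) or a vertical length-one arrow (a term of $\bdy_z$). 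This applies uniformly to whatever arrows the iterated cancellation produces, so there is nothing further to check about higher $U$-powers or longer arrows. Without this count (or an equivalent argument) your proof is incomplete, since the formal reduction alone gives no control over the shape of the induced differential.
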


\begin{proof} In each vertical column of the $(U, A)$-lattice, the arrows that go between elements in the same position count disks that do not pass through either basepoint, and hence form the differential $\hat\bdy$.  We take homology with respect to these arrows. In terms of basis elements, if $ \bdy_-(a) = b_1+\cdots + b_n$, and $a$ and $b_1$ have the same Alexander filtration and $U$ power, and if $x_1, \ldots, x_k$ are all the other elements that point to $b_1$, then we replace the basis vectors  $b_1, b_2, \ldots, b_n, x_1, \ldots, x_k$ with $b_1+\cdots + b_n, b_2, \ldots, b_n, x_1+a,\ldots, x_k+a$. In this way, we get an isolated arrow from $a$ to $b_1+\cdots + b_n$, so we can delete it. Repeating this until there are no more such arrows, we get a complex with generators $\hfkhat(K)\otimes \F_2[U]$. 

Since $K$ is thin, the difference in the Maslov gradings of any two generators of $\hfkhat$ is equal to the difference in their Alexander filtrations. Thus, if an arrow pointing from $x$ to $U^ly$ drops the Alexander filtration by $k$, then $M(x) - M(y) = A(x)-A(y) = k-l$, since multiplication by $U$ drops the Alexander filtration by $1$. On the other hand, since the differential always drops the Maslov grading by $1$, and multiplication by $U$ drops it by $2$, then $1 = M(x) - M(U^ly) = k-l +2l = k+l$. Then either $k=0$ and $l=1$, or $k=1$ and $l=0$.  In the first case we have a horizontal arrow of length one pointing to the left and contributing to $U  \bdy_w$, and in the second case we have a vertical arrow of length one pointing down and contributing to $\bdy_z$. 
\end{proof}

\begin{defn} A free, finitely generated, chain complex  $\mathcal C$ over $\F_2[U]$ is automatically endowed with a $U$-power filtration. An Alexander filtration $A$ is a filtration such that
\begin{itemize}
\item multiplication by $U$ lowers the $A$ filtration by $1$,
\item the differential respects $A$.
\end{itemize}
The complex $\mathcal C$ is said to be \emph{thin} if the differential  lowers the sum of $A$ and $U$-power filtration by exactly $1$. 
\end{defn}

This definition is equivalent to saying that in the graph of $\mathcal C$ all arrows are either vertical or horizontal and have length one. 

Given a thin complex, call the map consisting of all vertical arrows $\bdy_z$, and the map consisting of all horizontal arrows $U\bdy_w$. We choose this notation in order to be consistent with the case of a knot Floer complex. For a homogeneous element $x$, 
$$\bdy^2 x = (\bdy_z+U\bdy_w)^2 x = \bdy_z^2 x+U^2\bdy_w^2x +(\bdy_z(U\bdy_w)+(U\bdy_w)\bdy_z) x,$$ 
where the three homogeneous summands have distinct positions on the lattice. Since $\bdy^2=0$, then all three summands must be identically zero, showing that the maps $\bdy_z$ and $U\bdy_w$ are differentials. 

The \emph{vertical complex} $\mathcal C^{\mathrm{vert}} := \mathcal C/ (U\cdot \mathcal C)$ is a chain complex which inherits the Alexander filtration from $\mathcal C$. We call its homology the \emph{vertical homology}, denoted $H^{\mathrm{vert}}(\mathcal C)$. We also define the \emph{horizontal complex} $\mathcal C^{\mathrm{horz}}$ to be the degree zero part of the associated graded space to $\mathcal C\otimes_{\F_2[U]} \F_2[U, U^{-1}]$ with respect to the Alexander filtration. It is filtered by the $U$ powers, and inherits a differential from $\mathcal C$. We call its homology the \emph{horizontal homology}, denoted $H^{\mathrm{horz}}(\mathcal C)$. 

When $\mathcal C\cong \cfkm(K)$, then $\bdy_z$ and $\bdy_w$ are the differentials for $\cfhat(S^3)$ with respect to the two different basepoints. In that case 
$$C^{\mathrm{vert}}(\cfkm(K))\cong C^{\mathrm{horz}}(\cfkm(K))\cong \cfhat(S^3),$$ and 
$$H^{\mathrm{vert}}(\cfkm(K))\cong H^{\mathrm{horz}}(\cfkm(K))\cong \hfhat(S^3)\cong \F_2.$$

\begin{lemma} Suppose  $\mathcal C$ is a thin complex with horizontal and vertical homologies of rank at most $1$. Then $\mathcal C$ is  isomorphic to a direct sum of complexes, each modeled by one of the complexes in  Figure \ref{fig:cfkm}. In particular, $(\hfkhat(K)\otimes \F_2[U], \bdy_z+U\bdy_w)$ has a model complex isomorphic to a direct sum of these model complexes.
\end{lemma}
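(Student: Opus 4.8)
The plan is to reduce the statement to the classification of bounded bicomplexes over $\F_2$ and then carry out that classification by a cancellation argument. Since $\bdy_z$ and $\bdy_w$ are $\F_2[U]$-linear and are determined by their restrictions to a bihomogeneous $\F_2[U]$-basis $\mathcal B$, any splitting of $\mathcal B$ as an $\F_2$-vector space into bigraded subspaces invariant under both $\bdy_z$ and $\bdy_w$ tensors up with $\F_2[U]$ to a splitting of $\mathcal C$ as a complex; so it suffices to decompose $\mathcal B$, equipped with the two commuting, square-zero operators $\bdy_z$ (whose arrows are vertical of length one) and $\bdy_w$ (whose arrows, drawn for $U\bdy_w$, are horizontal of length one). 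The length-one condition of thinness says precisely that this is the data of a bounded bicomplex over $\F_2$ graded by the $(U,A)$-lattice, and the hypotheses read $\mathrm{rank}\, H(\mathcal B,\bdy_z)=\mathrm{rank}\, H^{\mathrm{vert}}(\mathcal C)\le 1$ and $\mathrm{rank}\, H(\mathcal B,\bdy_w)=\mathrm{rank}\, H^{\mathrm{horz}}(\mathcal C)\le 1$.

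I would first show that $\mathcal C$ decomposes as a direct sum of \emph{squares} and \emph{zigzags}, where a zigzag is a connected string of generators joined by $\bdy_z$- and $\bdy_w$-arrows which (after normalization) necessarily alternate in type, the length-zero zigzag being a single generator. The mechanism is a cancellation algorithm run by induction on $\dim_{\F_2}\mathcal B$: put $\bdy_z$ in its elementary $\F_2$-normal form; if some basis vector $p$ has $\bdy_z\bdy_w p=\bdy_w\bdy_z p\ne 0$, then after a change of basis that keeps $\bdy_z$ normalized one arranges that $\bdy_z p$, $\bdy_w p$, $\bdy_z\bdy_w p$ are basis vectors met by no other arrows, so that these four vectors and their arrows split off as a direct summand isomorphic to the square of Figure \ref{fig:cfkm}. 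Once all such squares are removed we have $\bdy_z\bdy_w=\bdy_w\bdy_z=0$, and an analogous isolate-and-split step peels the rest into zigzags. The point that makes the bigrading indispensable is that no ``band'' (one-parameter family) indecomposable can occur: a band would force a generator lying simultaneously one step below and one step to the left of another, which the lattice grading forbids.

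Finally I would impose the rank constraint on such a decomposition. A square is acyclic for both $\bdy_z$ and $\bdy_w$, so it may appear with arbitrary multiplicity. In a zigzag of positive length $n$ the only generators surviving in homology are the two endpoints: if $n$ is odd they both survive in the same one of $H^{\mathrm{vert}}$, $H^{\mathrm{horz}}$ (contributing rank $2$ there), and if $n$ is even they survive one in each (rank $1$ in each); the length-zero zigzag contributes rank $1$ to both. Hence $\mathrm{rank}\, H^{\mathrm{vert}}\le 1$ and $\mathrm{rank}\, H^{\mathrm{horz}}\le 1$ rule out every odd-length zigzag and permit at most one summand among $\{\text{length-zero zigzag, even-length zigzags}\}$ — and these are exactly the staircase complexes of Figure \ref{fig:cfkm}, the single generator being the degenerate case and no such summand appearing precisely when both homologies vanish. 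This gives the first assertion. For the ``in particular'' clause: by the preceding lemma $(\hfkhat(K)\otimes\F_2[U],\bdy_z+U\bdy_w)$ is a thin complex, and for $\mathcal C\cong\cfkm(K)$ we have $H^{\mathrm{vert}}(\mathcal C)\cong H^{\mathrm{horz}}(\mathcal C)\cong\hfhat(S^3)\cong\F_2$, so the classification applies verbatim. I expect the main obstacle to be the square-splitting step: each of $\bdy_z$, $\bdy_w$ is trivial to normalize on its own, but a basis change adapted to one can disturb the other, so extracting a square while keeping $\bdy_z$ in normal form requires a careful case analysis of which arrows meet the four vertices being removed — this is exactly where commutativity and the rank hypotheses are used.
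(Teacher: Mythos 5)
Your argument is correct, but it is organized quite differently from the paper's. The paper runs a single induction on $\mathrm{rk}_{\F_2[U]}\mathcal C$: it picks a basis element $b_1$ of minimal Alexander filtration and, by a case analysis on which arrows meet $b_1$, splits off exactly one square or one staircase per step, invoking the rank-one hypothesis on $H^{\mathrm{vert}}$ and $H^{\mathrm{horz}}$ \emph{inside} the induction to force each staircase to continue or terminate correctly (this is where the zig-zag basis changes in Cases 1.2 and 1.2.2 appear). You instead prove the unconstrained structure theorem first --- every bounded bicomplex over a field with the given lattice grading splits into squares and zigzags, bands being excluded by the grading --- and only then impose the homology hypotheses, computing that odd-length zigzags contribute rank $2$ to one of the two homologies while even-length zigzags and the single generator contribute rank $1$ to each, so that at most one non-square summand survives and it must be a staircase $C_l$. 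Your route is more modular and yields a classification of arbitrary thin complexes as a byproduct; the paper's route avoids having to prove the full square-zigzag decomposition and keeps the basis changes local. The one step you correctly flag as delicate --- extracting a square while controlling both differentials at once --- is precisely the paper's Case 1.1, and it is worth noting that the paper's device for taming it is the extremal choice of $b_1$: minimality of the Alexander filtration kills in advance several of the arrows that could otherwise interfere ($w$-arrows into $a$ or $b$, $z$-arrows out of $b$), which is the main simplification your global two-stage normalization forgoes. Your homology bookkeeping for the zigzags is right, and the identification of the even-length zigzags (in their two phases) with the staircases $C_l$, $l\gtrless 0$, and of the length-zero zigzag with $C_0$, matches Figure \ref{fig:cfkm}.
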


\psfrag{x1}{$x_1$}
\psfrag{x2}{$x_2$}
\psfrag{x3}{$x_3$}
\psfrag{x4}{$x_4$}
\psfrag{w}{$^w$}
\psfrag{a_1}{$a_1$}
\psfrag{a_2}{$a_2$}
\psfrag{a2l}{$a_{2|l|}$}
\psfrag{a2l+1}{$a_{2|l|+1}$}
\psfrag{a2|l|}{$a_{2l}$}
\psfrag{a2|l|+1}{$a_{2l+1}$}
\psfrag{C}{$C$}
\psfrag{Cl>=0}{$C_l$ for $l \leq 0$}
\psfrag{Cl<0}{$C_l$ for $l > 0$}

\begin{figure}[h]
  \centering
 \includegraphics[scale=0.5]{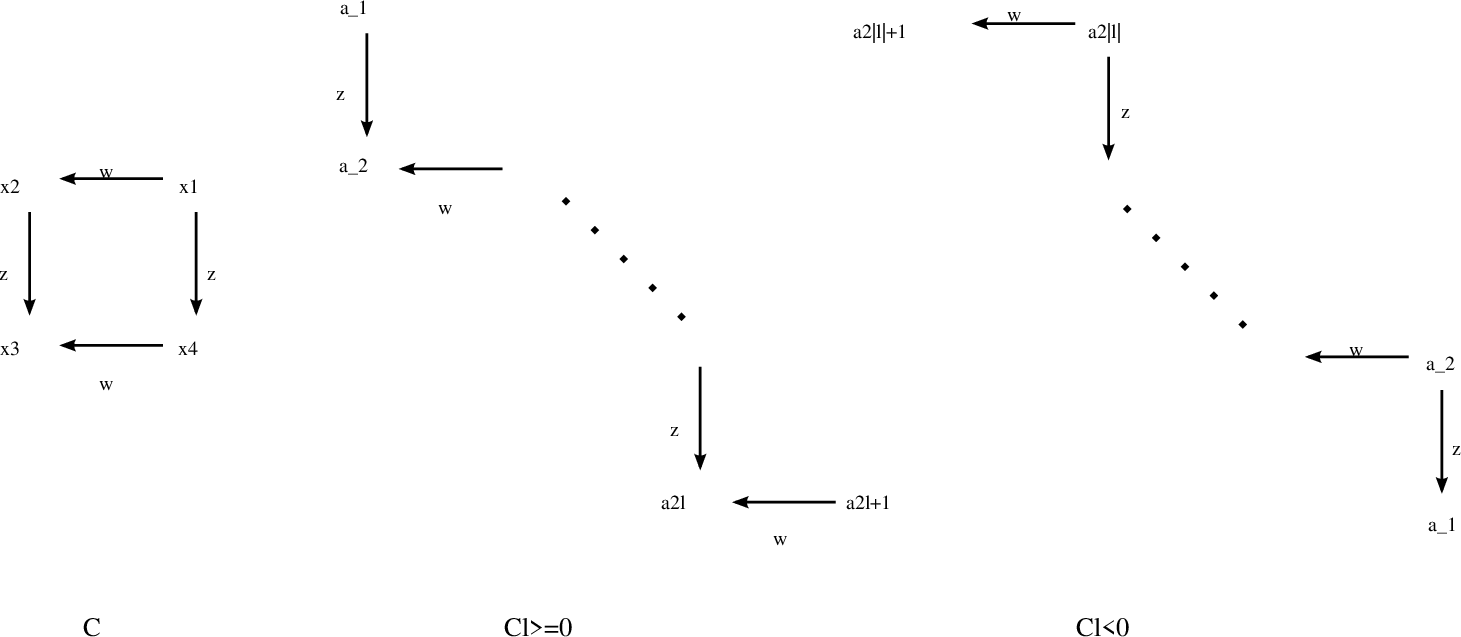}
  \caption{Model complexes for $\cfkm$ of a thin knot $K$, where $l = \tau(K)$. A $w$-arrow from $x$ to $y$ means that $y$ has coefficient $U$ in $\bdy x$.}
  \label{fig:cfkm}
\end{figure}

\begin{proof} For the sake of simplicity, we slightly abuse notation in this proof. We will say there is a $w$-arrow from $x$ to $y$, to mean that there is a $w$-arrow from $x$ to $Uy$. We will also denote $U\bdy_w$ simply by $\bdy_w$, and thus say $\bdy_w x = y$, instead of $U\bdy_w x = Uy$.

We prove the lemma by induction on $\mathrm{rk}_{\F_2[U]} (\mathcal C)$, since $\mathcal C$ has finite rank over $\F_2[U]$. We change basis in $\mathcal C$ over $\F_2[U]$ homogeneously to split off a $C$ or  a $C_i$ summand.  Then $\mathcal C \cong C\oplus \mathcal C'$ or $\mathcal C \cong C_i\oplus \mathcal C'$, where $\mathcal C'$ has lower rank than $\mathcal C$, and vertical and horizontal homologies of rank at most $1$, hence $\mathcal C'$ must split in the desired way too. Thus, $\mathcal C$ splits into a direct sum of the model complexes by induction.

On the $(U, A)$-lattice, the complex  $\mathcal C$ is supported in a strip of finite width and slope $1$. Choose a nonzero basis element $b_1$ over $\F_2[U]$ of smallest Alexander filtration possible (so $b_1$ is on the lower boundary edge of the strip).

{\bf{\bigskip}{\noindent}{\underline{Case 1:}}} There is a vertical arrow pointing to $b_1$.

Let $a$ be a generator that has a $z$-arrow to $b_1$. If $\bdy_z a = b_1$, let $b= b_1$, and if $\bdy_z a= b_1+\cdots +b_n$ and $n>1$, change basis by replacing $b_1, \ldots, b_n$ with $b = b_1+\cdots + b_n, b_2, \ldots, b_n$. Now $\bdy_z a = b$. By our choice of $b_1$, and since $\bdy^2 = 0$, we know that there is no $w$-arrow pointing to $a$ or $b$, and no $z$-arrow originating at $b$. 
If there are other generators with a $z$-arrow to $b$, add $a$ to each of them, so that in the new basis only $a$ has a $z$-arrow to $b$.

{\bf{\bigskip}{\underline{Case 1.1:}}} $\bdy_w b \neq 0.$ 

We will split off a $C$ summand. 
Since $(\bdy_z\bdy_w+\bdy_w\bdy_z) a = 0$, we have $ \bdy_wa = c\neq 0$, $\bdy_zc =  \bdy_w b$. By changing basis if necessary, we may assume $c$ is a basis element. Since $\bdy_z^2c = 0$, and $a$ is the only generator with $b$ in the image of its $\bdy_z$ differential, it follows that $a$ does not appear in $\bdy_z c =  \bdy_w b$. Thus, we may change basis if necessary  so that $\bdy_w b = d$ with $d$ a basis element, without affecting the choices made so far. Now $\bdy_w^2 = 0$ implies that $\bdy_w d = 0$, and $\bdy_z^2= 0$ implies that $\bdy_z d = 0$. 

For any other $b'$ that has a $w$-arrow to $d$, replace it by $b+b'$, so that $b$ remains the only generator with a $w$-arrow to $d$. In the same way we arrange that $c$ is the only generator with a $z$-arrow to $d$. Now $\bdy_z^2= 0 $ implies that no $z$-arrow points to $c$. After the last two changes, $a$ may no longer be the only generator with a $z$-arrow to $b$. 

Suppose there is some $a'\neq a$ with a $z$-arrow to $b$. Since $b$ is the only generator with a $w$-arrow to $d$, and $c$ is the only one with a $z$-arrow to $d$, then $\bdy_z\bdy_w+\bdy_w\bdy_z = 0 $ implies that $a'$ also points to $c$. Similarly, if $a'$ points to $c$, it must also point to $b$. Add $a$ to all such $a'$, so that $a$ is the only generator with a $z$-arrow to $b$, and the only one with a $w$-arrow to $c$. From $\bdy^2=0$  it follows that nothing points to $a$.

 \psfrag{a}{$a$}
 \psfrag{b}{$b$}
 \psfrag{c}{$c$}
 \psfrag{d}{$d$}
 
Thus, we have changed basis to split off a $C$, modeled by the square 
\begin{figure}[h]
\centering
 \includegraphics[scale=.5]{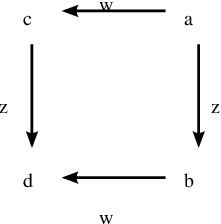}
\end{figure}

{\bf{\bigskip}{\underline{Case 1.2:}}} $\bdy_w b = 0.$

We will split off a $C_l$ summand. 
Add $a$ to any other generator that has a $z$-arrow to $b$, so that now only $a$ does. Now no $z$-arrow points to $a$, since $\bdy_z^2 = 0$.  Note that $\bdy_w b = 0$ implies that $b$ survives in horizontal homology. By the rank assumption in this Lemma, $H^{\mathrm{horz}}(\mathcal C)\cong\F_2$, represented by $b$, and so no other generator survives in horizontal homology. In particular $a$ does not survive, so  $\bdy_w a\neq 0$. As before, we may assume that $\bdy_w a = c$, where $c$ is a basis element. Note that $\bdy_w c = \bdy_z c = 0$.

Suppose that some $a_1\neq a$ has a $w$-edge to $c$, and add $a_1$ to all other such generators except $a$. Now only $a$ and $a_1$ have a $w$-edge to $c$. If $a_1$ also has $w$-edges to generators other than $c$, change basis as before to arrange that $\bdy_w a_1 = c+c_1$, where $c_1$ is a basis element. We can continue until we get a zig-zag, i.e. basis elements $a, a_1, \ldots, a_n$ with $\bdy_w a = c$, $\bdy_w a_1 = c+c_1$, $\bdy_w a_2 = c_1 + c_2, ..., \bdy_w a_{n-1} = c_{n-2}+c_{n-1}$, and either $\bdy_w a_n = c_{n-1}+ c_n$, or $\bdy_w a_n = c_{n-1}$, so that no other $w$-edge points to any $c_i$. In the first case, we replace the basis vectors $c, c_1, \ldots, c_n$ with $c, c+c_1, c_1 + c_2, \ldots, c_{n-1}+ c_n$, and in the second, we get a contradiction to the fact that the horizontal homology has rank one. Now only $a$ has a $w$-edge to $c$.

{\bf{\bigskip}{\underline{Case 1.2.1:}}} If no $z$-arrow points to $c$, then we split off the $C_1$ staircase
  
 \psfrag{a}{$a$}
 \psfrag{b}{$b$}
 \psfrag{c}{$c$}
  \begin{figure}[h]
\centering
 \includegraphics[scale=.5]{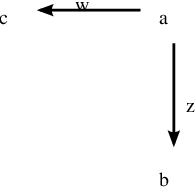}
\end{figure}

 {\bf{\bigskip}{\underline{Case 1.2.2:}}} If there is a $z$-arrow pointing to $c$, we may assume that in fact only one basis element $d$ has a $z$-arrow to $c$. 
  
 If $\bdy_z d \neq c$, then we may arrange that $\bdy_z d = c+ c^1$,  where $c^1$ is another basis element. As before, we can get a zig-zag $\bdy_z d = c+c^1$, $\bdy_z d_1 = c^1+c^2, \ldots, \bdy_z d_{k-1} = c^{k-1} + c^{k}$ and either $\bdy_z d_k = c^k + c^{k+1}$, or $\bdy_z d_k = c^k$, so that $d_i$ are in the basis and no other $z$-arrow points to any of the $c^i$. In the first case, we replace the basis vectors $c, c^1, \ldots, c^{k+1}$ by $c, c+c^1, c^1+c^2, \ldots, c^k + c^{k+1}$, and we split off the $abc$ staircase, i.e., a $C_{-1}$. In the second, we change basis by adding all $d_i$ to $d$, so that only $d' = d+d_1+\cdots + d_k$ has a $z$-arrow to $c$. Then there is no $w$-arrow to $d'$, so we can repeat the steps of Case 1.2, beginning at $d'$ instead of $a$. 
 
 If $\bdy_z d = c$, we can repeat the steps of Case 1.2, beginning at $d$ instead of $a$.

 Since the complex is supported in a diagonal strip of finite width, eventually we have to stop, and we split off a staircase $C_l$ for some $l>0$. 

{\bf{\bigskip}{\noindent}{\underline{Case 2:}}} There is no vertical arrow pointing to $b = b_1$.

We will split off a $C_l$ summand. If $\bdy_w b = 0$, then we split off a single $b$. Otherwise, we may assume that $\bdy_w b = c$, where $c$ is a basis element. Add $b$ to any other $b'$ with a $w$-arrow to $c$, so that now only $b$ has a $w$-arrow to $c$. Since $\bdy^2 b=0$, then $\bdy_w c =\bdy_z c = 0$. Since there is no $z$ arrow to $b$, then the vertical homology is $\F_2$, represented by $b$, so there is some $d$ with a $z$-arrow to $c$, and we may assume that $d$ is a basis vector. Add $d$ to all other $d'$ that have a $z$-arrow to $c$, so that now only $d$ does. Since $\bdy^2 = 0$, there is no $w$-arrow to $d$. We can proceed as in Case 1.2.2. Eventually we split off a $C_l$ for some $l \leq 0$.
\\

In each of the cases we managed to split off a model complex, so by induction on the rank of $\mathcal C$ over $\F_2[U]$, we are done.

In the special case of $(\hfkhat(K)\otimes \F_2[U], \bdy_z+U\bdy_w)$, both the vertical and horizontal homologies have rank $1$. Hence, the complex splits into exactly one $C_l$ summand, and possibly multiple $C$ summands.
\end{proof}

\begin{proof}[Proof of Theorem \ref{squares}]
We showed there is an isomorphism
$$\oplus_{i=1}^k (C'_i)\cong (\hfkhat(K)\otimes \F_2[U], \bdy_z+U\bdy_w)$$
for some $k$, where each $C'_i$ is one of the model complexes in Figure \ref{fig:cfkm}. If we restrict to the vertical column of the $(U, A)$-lattice where the $U$-power is zero, we see exactly one representative over $\F_2[U]$ of each generator of $\oplus_{i=1}^k C'_i$. For each square $C$, its representatives in this column appear in three adjacent Alexander gradings, with two representatives in the middle grading. For the staircase $C_l$, its representatives in the column appear one in each of $2|l|+1$ adjacent gradings. Also note that this column is isomorphic to $\hfkhat(K)$, so its rank  in any Alexander grading $a$ equals the rank of $\hfkhat(K)$ in the same Alexander grading $a$ (which also equals the absolute value of the coefficient of the symmetrized Alexander polynomial in degree $a$). Figure \ref{fig:5_2} illustrates these observations for the knot $5_2$.

 \psfrag{...}{\reflectbox{$\ddots$}}
  \psfrag{a_3}{$a_3$}

\begin{figure}[h]
  \centering
 \includegraphics[scale=0.5]{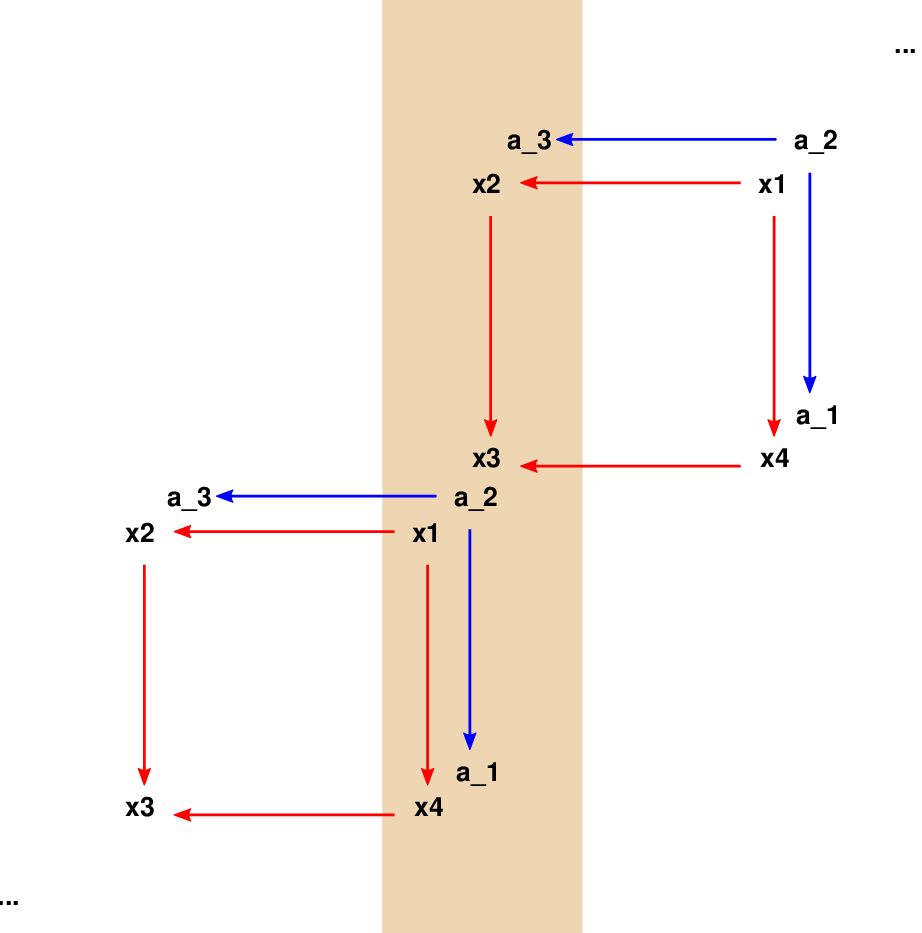}
  \caption{$\cfkm$ of the $5_2$ knot. The $U^0$ column is highlighted. To keep the figure simple, we omit writing the $U$-power of the generators when translating them by the $U$-action.}
  \label{fig:5_2}
\end{figure}

The two ends of the staircase are generators for $\widehat{HF}(S^3)$, $a_1$ with respect to the basepoint $z$, and $a_{2|l|+1}$ with respect to the basepoint $w$. Thus, the representative for $a_1$ has Alexander grading $-\tau(K)$, and the one for $a_{2|l|+1}$ has Alexander grading $\tau(K)$ (see Section \ref{tau} for the definition of $\tau$). Then the staircase looks like $C_{\tau(K)}$. It contains $2|\tau(K)|+1$ elements, one in each Alexander grading $i$, where $-|\tau(K)|\leq i\leq |\tau(K)|$.

Let $a_i'$ be the rank of the column in Alexander grading $i$ after removing all the staircase generators. In other words,

\begin{displaymath} 
a_i' = \left\{ \begin{array}{ll} 
|a_i| & \textrm{if $|i| > |\tau(K)|$}\\ 
|a_i|-1 & \textrm{otherwise,}
\end{array} \right. 
\end{displaymath} 
where $a_i$ is the coefficient of $t^i$ in the symmetrized Alexander polynomial $\Delta_K(t)$. Let $c_i$ be the number of squares with an upper right corner representative in Alexander grading $i$. We see that $c_{g-1} = a_g'$, $c_{g-2} = a_{g-1}' - 2c_{g-1}$, and in general we get the recursive formula $c_i = a_{i+1}' - 2c_{i+1} - c_{i+2}$. Note, in particular, that $c_i = c_{-i}$. 
\end{proof}

\subsection{ \texorpdfstring{$\cfdhat$ from $\cfkm$}{CFD from CFK-}}\label{dfrom-}

Theorems $11.27$ and A.11 of \cite{bfh2} together provide an algorithm for computing $\cfdhat$ of any bordered knot complement with framing $n$ from $\cfkm$. In particular, if $K$ is thin, we take the simplified basis described in Section \ref{basis} and modify each square and the one staircase as in Figure \ref{fig:tocfd}. To simplify our notation when working with indices and gradings, we will often write $\tau$ for $\tau(K)$ when it is clear from the context what we mean.

\psfrag{d1}{$^{D_1}$}
\psfrag{d2}{$^{D_2}$}
\psfrag{d3}{$^{D_3}$}
\psfrag{d123}{$^{D_{123}}$}
\psfrag{y1}{$y_1$}
\psfrag{y2}{$y_2$}
\psfrag{y3}{$y_3$}
\psfrag{y4}{$y_4$}

\psfrag{a2l}{\hspace{-.2cm}$a_{2|\tau|}$}
\psfrag{a2l+1}{$a_{2|\tau|+1}$}
\psfrag{a2|l|}{$a_{2\tau}$}
\psfrag{a2|l|+1}{\hspace{4pt}$a_{2\tau+1}$}

\psfrag{u1}{$u_1$}
\psfrag{u2}{$u_2$}
\psfrag{v1}{$v_1$}
\psfrag{v2}{$v_2$}
\psfrag{u2l}{\hspace{-.2cm}$u_{2|\tau|}$}
\psfrag{u2l+1}{\hspace{-1pt}$u_{2|\tau|+1}$}
\psfrag{v2l}{$v_{2|\tau|}$}
\psfrag{v2l-1}{\hspace{-.2cm}$v_{2|\tau|-1}$}
\psfrag{u2|l|}{$u_{2\tau}$}
\psfrag{u2|l|+1}{$u_{2\tau+1}$}
\psfrag{v2|l|}{\hspace{.1cm}$v_{2\tau}$}
\psfrag{v2|l|-1}{$v_{2\tau-1}$}

  The dashed diagonal arrow $\dashrightarrow$  stands for
\begin{displaymath}
\xymatrix{ 
\ar[r]^{D_{12}} & & & & & &  \text{if }  n = 2\tau, \hspace{62pt}\\
\ar[r]^{D_1}&\mu_1  &  \mu_2 \ar[l]_{D_{23}}& \dots\ar[l]_{D_{23}} & \mu_m \ar[l]_{D_{23}}& \ar[l]_{D_3} &  \text{if } n = 2\tau-m, \quad  m>0,\\
\ar[r]^{D_{123}}&\mu_1 \ar[r]^{D_{23}} &  \mu_2 \ar[r]^{D_{23}}& \dots\ar[r]^{D_{23}}& \mu_{|m|} \ar[r]^{D_2}&  &  \text{if } n = 2\tau-m, \quad  m<0.
}
\end{displaymath}

Note that when $\tau(K)=0$ and $n\geq 2\tau(K)$, the type $D$ structure is not bounded. In that case, to obtain a bounded $\cfdhat(K, n)$  we modify the dashed arrow to
\begin{displaymath}
\xymatrix{ 
\ar[r]^{D_{1}} &\epsilon_1 &\ar[l]_{1} \epsilon_2  \ar[r]^{D_{2}}& & & & &\hspace{-20pt} \text{if }  n = 2\tau, \hspace{60pt}\\
\ar[r]^{D_{1}} &\epsilon_1 &\ar[l]_{1} \epsilon_2  \ar[r]^{D_{23}}&\mu_1 \ar[r]^{D_{23}} & \dots\ar[r]^{D_{23}}& \mu_{|m|} \ar[r]^{D_2}&  & \hspace{-20pt} \text{if } n = 2\tau-m, \hspace{.2cm}  m<0.
}
\end{displaymath}

\begin{figure}[H]
\vskip 1 cm
  \centering
\hspace{-22pt}   \includegraphics[scale=0.4]{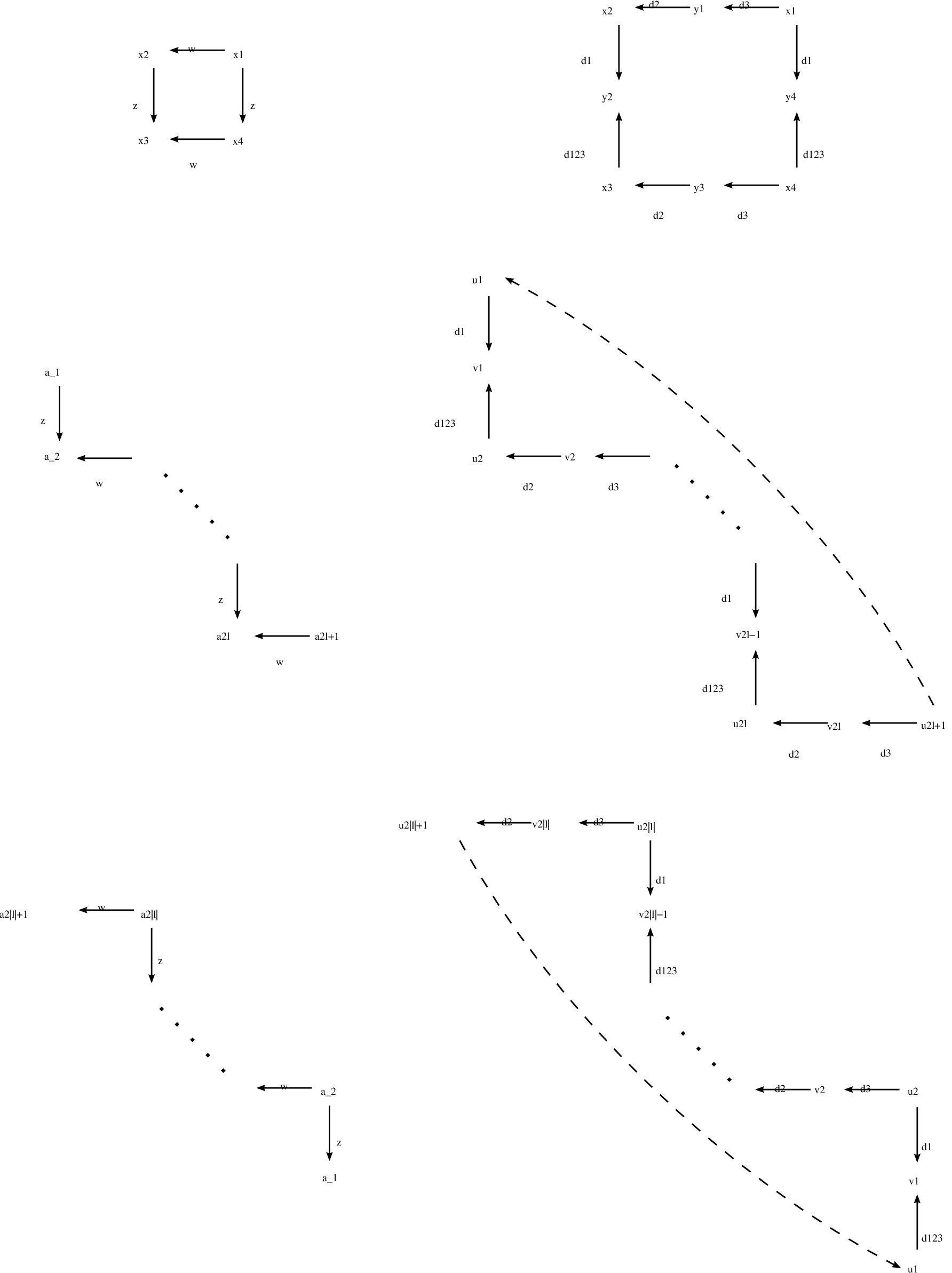}
  \caption{From $\cfkm$ to $\cfdhat$. The table shows each model knot Floer complex on the left, and the corresponding type $D$ module on the right.}
  \label{fig:tocfd}
\end{figure}

Next, we find the gradings of the elements of $\cfdhat (K, n)$. Recall that for a homology solid torus, such as a knot complement, the group of periodic domains is isomorphic to $\Z$ (see, for example,  the discussion above  \cite[Lemma 11.40]{bfh2}), so working with base generator $u_1$, the image of this group $\pi_2(u_1, u_1)$ in $G$ has a single generator $h$. Thus, $\cfdhat (K, n)$ is graded by $G/\h$. We normalize the grading by setting $gr(u_1) = (0; 0,0; 0)/\h$. Starting at $u_1$ and using (\ref{eqn:cfd}) and the grading on the algebra, we  go along the staircase, then along the dashed arrow:

{\bf{\bigskip}{\noindent}{\underline{Case 1:}}} If $\tau(K) \leq 0$, the   staircase $C_{\tau}$ is graded as follows
\begin{align*}
gr(u_{2k+1}) &= (k; 0, 2k; 0)/ \h \\
gr(v_{2k+1}) &= (-\textstyle\frac{1}{2}; -\frac{1}{2}, 2k+\frac{1}{2}; 0)/\h  \\
gr(u_{2k}) &= (k-\textstyle\frac{1}{2}; 0, 2k-1; 0)/ \h \\
gr(v_{2k}) &= (2k-\textstyle\frac{1}{2}; \frac{1}{2}, 2k-\frac{1}{2}; 0)/ \h.
\end{align*}
If $m>0$, we have the extra elements $\mu_1, \ldots, \mu_m$, with gradings
$$gr(\mu_{i+1}) = (i-\textstyle\frac{1}{2}; -\textstyle\frac{1}{2}, i+\textstyle\frac{1}{2}-2\tau; 0)/ \h.$$
If $m\leq 0$ and $\tau(K) = 0$, we have $\epsilon_1$ and $\epsilon_2$ graded as
\begin{align*}
gr(\epsilon_1) &= (-\textstyle\frac{1}{2}; -\textstyle\frac{1}{2}, \textstyle\frac{1}{2}; 0)/ \h \\
gr(\epsilon_2) &= (\textstyle\frac{1}{2}; -\textstyle\frac{1}{2},  \textstyle\frac{1}{2}; 0)/ \h,
\end{align*}
and for any $\tau(K)\leq 0$ the additional elements $\mu_1, \ldots, \mu_{|m|}$ when $m<0$, with gradings
$$gr(\mu_{i+1}) = (-i-\textstyle\frac{1}{2}; -\textstyle\frac{1}{2}, -i-\textstyle\frac{1}{2}-2\tau; 0)/ \h.$$
In each case, by closing the loop back at $u_1$ along the dashed arrow, we see that the grading of $u_1$ is also given by 
$$gr(u_1) = (\textstyle\frac{m}{2}-\textstyle\frac{1}{2}+\tau; -1, m-2\tau; 0)/\h.$$
The difference $(\textstyle\frac{m}{2}-\textstyle\frac{1}{2}+\tau; -1, m-2\tau; 0)$ of the two grading representatives then lies in $\h$. Since this difference is primitive, it equals $h$ or its inverse, so we choose
$$h = (\textstyle\frac{m}{2}-\textstyle\frac{1}{2}+\tau; -1, m-2\tau; 0).$$

{\bf{\noindent}{\underline{Case 2:}}} If $\tau(K) > 0$, the staircase  $C_{\tau}$ is graded as follows
\begin{align*}
gr(u_{2k+1}) &= (-k; 0, -2k; 0)/ \h \\
gr(v_{2k+1}) &= (-\textstyle\frac{1}{2}; -\frac{1}{2}, -2k-\frac{1}{2}; 0)/\h  \\
gr(u_{2k}) &= (-k+\textstyle\frac{1}{2}; 0, -2k+1; 0)/ \h \\
gr(v_{2k}) &= (-2k+\textstyle\frac{1}{2}; \frac{1}{2}, -2k+\frac{1}{2}; 0)/ \h,
\end{align*}
and $h$ and the gradings of the extra elements for each framing are given by the same formula as in the $\tau(K)\leq 0$ case.

To compute the gradings of all squares, we rely on the following lemma.

\begin{lemma}\label{cfdgr}
All elements  of $\cfkm$ on a fixed line of slope $1$ on the  $(U, A)$-lattice are converted to elements of the same grading in $\cfdhat$. In fact, if $x$ and $y$ are the generators of $\cfdhat$ in idempotent $\iota_0$ corresponding to $x'$ and $y'$ in $\cfkm$, and if 
$$M(x')-M(y') = n = A(x')-A(y'),$$ then the relative $G/\h$ grading of $x$ and $y$ is given by
$$gr(y) = (\textstyle\frac{n}{2}; 0, n; 0)gr(x).$$
\end{lemma}

\begin{proof}
The Lemma follows directly from \cite{bfh2}. 
The changes of bases in the proofs of \cite[Theorems 11.27, 11.35, and 11.37]{bfh2} all respect gradings, so it suffices to verify the statement for generators in a pair of Heegaard diagrams $\mathcal H_K$ and $\mathcal H(n)$, as in \cite[Figure 11.8]{bfh2}.  There is only one $Spin^c$ structure for $S^3$, so $\pi_2(x', y')$ is nonempty. Take any domain $D$ from $x'$ to $y'$, and add to it $-n_z(D)$ copies of the Heegaard surface for $\mathcal H_K$, to obtain a domain $B'$ from $x'$ to $y'$ that misses the basepoint $z$. From
\begin{align*}
M(x')-M(y') &= \textrm{ind}(B') - 2n_w(B') = n\\
A(x')-A(y') &= n_z(B') - n_w(B') = n
\end{align*}
it follows that $n_w(B') = -n$ and $\textrm{ind}(B')= -n$. In the bordered diagram $\mathcal H(n)$, there is a corresponding domain $B$ from $x$ to $y$ which crosses the boundary regions labeled by $2$ and $3$ with multiplicity $-n$ each. We make use of the more general grading theory for a moment, and work with the grading group $G'(4)$ \cite[Section 10]{bfh2}. From \cite[Equations 10.2, 10.19, and 10.27]{bfh2}, 
\begin{align*}
R(g'(B)) &= (-e(B)-n_x(B) - n_y(B); r_{\ast}(\bdy^{\bdy}(B)))\\
gr'(y) &= R(g'(B))gr'(x).
\end{align*}
Observe that
\begin{align*}
e(B)+n_x(B) + n_y(B) &= e(B')+\frac{n}{2}+n_{x'}(B') + n_{y'}(B')\\
&=\textrm{ind}(B')+\frac{n}{2}\\
&= -\frac{n}{2},
\end{align*}
so  $gr'(y) = (\frac{n}{2}; 0, n, n)gr'(x)$. Switching back to the grading group $G$, $gr(y) = (\frac{n}{2}; 0, n; 0)gr(x)$.
\end{proof}

We say that a square of $\cfdhat$ lies in level $t$ if the upper right corner of the corresponding small square of $\cfkm(K)$ is on a line of slope $1$ that is $t$ units below the line through $a_1$, i.e., the upper right corner element has Maslov grading $-2\tau(K)-t$ as an element of $\hfkhat$. Note that $t$ can be negative, meaning that the square is above the $a_1$-line. By Theorem \ref{squares}, there  are $c_{t+\tau(K)}$ squares in level $t$.
By Lemma \ref{cfdgr}, each square in level $t$ has upper right corner $x_1$ in grading $(\frac{t}{2}; 0, t; 0)/\h$, and using (\ref{eqn:cfd}) again, the grading of the whole square is given by
\begin{align*}
gr(x_1) & = (\textstyle \frac{t}{2}; 0, t; 0)/\h & gr(y_1) & = (t \textstyle -\frac{1}{2}; \frac{1}{2},t -\frac{1}{2}; 0)/\h\\
gr(x_2) & = (\textstyle \frac{t}{2}-\frac{1}{2}; 0, t-1; 0)/\h & gr(y_2) & = (\textstyle -\frac{1}{2}; -\frac{1}{2}, t-\frac{1}{2}; 0)/\h\\
gr(x_3) & = (\textstyle \frac{t}{2}; 0, t; 0)/\h & gr(y_3) & = (t+\textstyle \frac{1}{2}; \frac{1}{2}, t+\frac{1}{2}; 0)/\h\\
gr(x_4) & = (\textstyle \frac{t}{2}+\frac{1}{2}; 0, t+1; 0)/\h & gr(y_4) & = (\textstyle -\frac{1}{2}; -\frac{1}{2}, t+ \frac{1}{2}; 0)/\h.
\end{align*}


\section{$\cfam$ of the $(p,1)$-cable in the solid torus}\label{cfa}

Figure \ref{fig:p1} shows a bordered Heegaard diagram for the $(p, 1)$-cable in the solid torus.

\psfrag{b_1}{$b_1$}
\psfrag{b_p-2}{$b_{p-2}$}
\psfrag{b_p-1}{$b_{p-1}$}
\psfrag{b_p}{$b_p$}
\psfrag{b_p+1}{$b_{p+1}$}
\psfrag{b_2p-2}{$b_{2p-2}$}
\psfrag{a}{$a$}
\psfrag{1}{$1$}
\psfrag{2}{$2$}
\psfrag{3}{$3$}
\psfrag{z}{$z$}
\psfrag{w}{$w$}
\psfrag{...}{$...$}

\begin{figure}[H]
  \centering
 \includegraphics[scale=0.6]{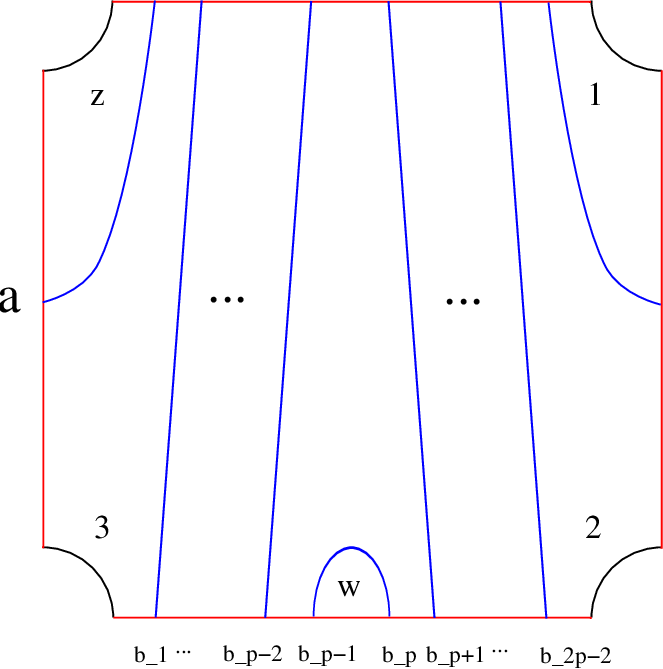}
  \caption{The $(p,1)$-cable in the solid torus}
  \label{fig:p1}
\end{figure}

 The module $\cfam(Y_{p,1})$ is generated over $\F_2[U]$ by $a, b_1, \ldots, b_{2p-2}$. The multiplication maps count certain $J$-holomorphic curves in $\Sigma\times [0,1]\times \R$, whose relative homology class  has index 1.  For more detail on the moduli spaces, indices, and expected dimension for bordered diagrams, see \cite[Section 5]{bfh2}.  Since the Heegaard surface $\Sigma$ is a punctured torus, we can instead count embedded disks of index $1$ in  its universal cover $\mathbb C \setminus (\Z\times \Z)$  connecting lifts of generators and missing the preimage of $z$, modulo vertical and horizontal translations of the lattice. The positive periodic domains are generated by the domain $B_a$ in Figure \ref{fig:pd}.  It is straightforward to enumerate the finitely many embedded disks whose boundary does not project to all of $\beta$. Any other disk that contributes to the multiplication maps is a sum of one of these and a positive number of copies of $B_a$. Figure \ref{fig:pd} shows the periodic domain, and the only domain that contributes to $m_4(b_6,\rho_2, \rho_{12}, \rho_1)$ on a portion of the universal cover for the $(4,1)$-cable.
 
 \begin{figure}[H]
  \centering
  \labellist
       \pinlabel $z$ at 25 260
       \pinlabel $a$ at -20 150
       \pinlabel $1$ at 30 120
       \pinlabel $\cdots$ at 90 120
       \pinlabel $\cdots$ at 210 120
       \pinlabel $1$ at 270 120
       \pinlabel $p-1$ at 150 120
       \pinlabel $p$ at 150 20
           \endlabellist
 \includegraphics[scale=0.5]{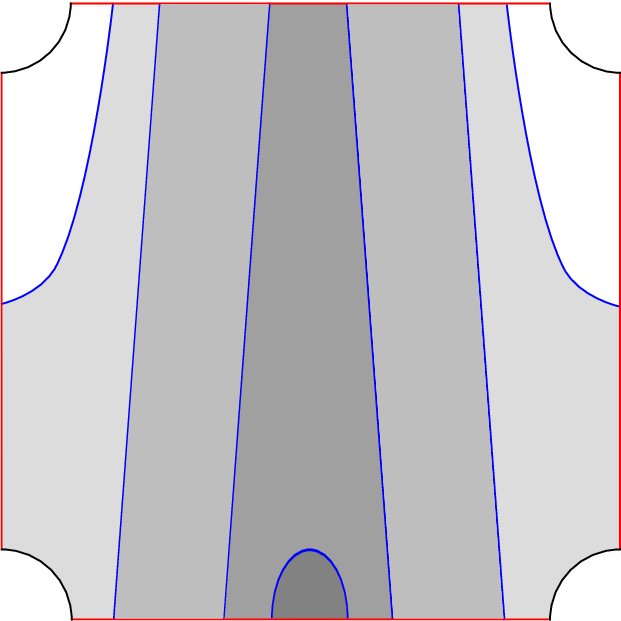}
 \hspace{1cm}
  \includegraphics[scale=0.55]{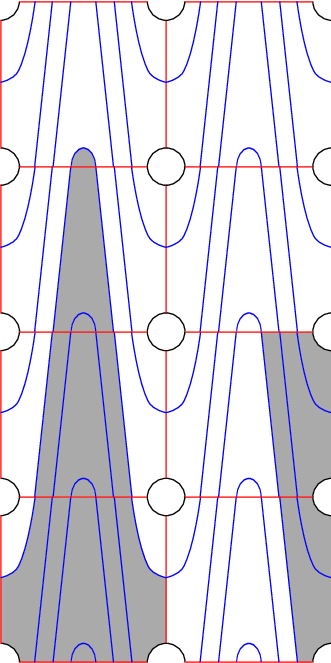}
  \caption{Left: The generator $B_a$ for positive periodic domains. Right: The lift of the Heegaard diagram to the universal cover of the punctured torus in the case of the $(4,1)$-cable, along with $B_a$ and the domain for $m_4(b_6,\rho_2, \rho_{12}, \rho_1)$.}
  \label{fig:pd}
\end{figure}

  We conclude  that the only multiplication maps are
\begin{align*}
m_1(b_k) &= U^{p-k}\cdot b_{2p-k-1} \quad &\text{f}&\text{or }  1\leq k\leq p-1 \\
m_{3+i}(b_k,\rho_2, \underbrace{\rho_{12}, \ldots, \rho_{12}}_i, \rho_1) & = U^{i+1}\cdot b_{k+i+1}   &\text{f}&\text{or } \begin{array}{l}1\leq k\leq p-2,\\
 0\leq i\leq p-k-2
 \end{array}\\
 m_{3+i}(b_k,\rho_2, \underbrace{\rho_{12}, \ldots, \rho_{12}}_i, \rho_1) & = b_{k-i-1}  &\text{f}&\text{or }  
 \begin{array}{l} p+1\leq k\leq 2p-2,\\
 0\leq i\leq k-1-p
 \end{array}\\
m_{2+i}(a,\underbrace{\rho_{12}, \ldots, \rho_{12}}_i, \rho_1) & = b_{2p-i-2}  &\text{f}&\text{or } 0\leq i\leq p-2\\
m_{4+i+j}(a,\rho_3, \underbrace{\rho_{23}, \ldots, \rho_{23}}_j, \rho_2, \hspace{1cm}
 & = U^{pj+i+1}\cdot b_{i+1}  &\text{f}&\text{or } 
\begin{array}{l}0\leq i\leq p-2,\\
 0\leq j
 \end{array}\\
\underbrace{\rho_{12}, \ldots, \rho_{12}}_i, \rho_1) & & & \\
 m_{3+j}(a, \rho_3, \underbrace{\rho_{23}, \ldots, \rho_{23}}_j, \rho_2 ) &= U^{p(j+1)}\cdot a \quad &\text{f}&\text{or }  0\leq j \\
\end{align*}
Setting $U=0$ yields $\cfahat(Y_{p,1})$ - the generators are $a, b_1, \ldots, b_{2p-2}$, and the multiplication maps are given by the third and fourth map above.

Applying (\ref{eqn:cfa}) to the last multiplication map for $j=0$, we find an indeterminacy of $(-\frac{1}{2}; 0,1; p)$ for the grading of $a$, i.e. the element $g = (-\frac{1}{2}; 0,1; p)$ is in the image of $\pi_2(a,a)$ in $G$. Since $g$ is primitive in $G$, it is a generator for that image. 

Next, we normalize the $\g\backslash G$ grading on the generators by setting
$$gr(a) := \g\backslash(0; 0, 0; 0). $$ 
From the fourth multiplication map we see that 
$$gr(b_{2p-i-2}) = \g\backslash(-\textstyle\frac{1}{2}; i+\frac{1}{2},-\frac{1}{2}; 0) \quad \text{for }  0\leq i\leq p-2, $$
and from the first map get
$$gr(b_i) = \g\backslash(\textstyle\frac{1}{2}; i-\frac{1}{2}, -\frac{1}{2}; i-p) \quad \text{for }  1\leq i\leq p-1.$$
We remind the reader that an alternate choice of multiplication maps for this computation may at first seem to provide different gradings, but one can verify that those grading representatives lie in the same coset as the ones provided above.

\note We can construct a Heegaard diagram similar to the one in Figure \ref{fig:p1} to study any cable $K_{p,q}$. Given any relatively prime integers $p$ and $q$, with $p>1$,  let $i$ be the unique integer $1\leq i< p$, $i = q \textrm{ mod } p$. One can construct a genus $1$ bordered diagram for the $(p, i)$-cable as follows. Theorem 3.5 of  \cite{phil} provides an algorithm for constructing genus $1$ Heegaard diagrams for $(1,1)$ knots. In particular, the construction for the diagram of the  torus knot $T_{p,i}$ can be modified to provide a bordered diagram for the $(p, i)$-cable in the solid torus. We first find the Heegaard normal form of the attaching curve $\beta$ (Figure 3.11 of \cite{phil} demonstrates this process for $T_{5,3}$) so that the corners of the fundamental domain for the flat torus coincide with the basepoint $w$. Then we draw two $\alpha$-circles through the basepoint $w$, one identified with the two vertical segments of the boundary of the domain, hence isotopic to the meridian of the torus, the other identified with the two horizontal segments of the boundary, hence isotopic to the standard longitude and to $\beta$, and remove a neighborhood of $w$. To be consistent with our conventions, we rename $z$ to $w$, and place a new basepoint $z$ in the bottom rightmost region of the diagram. Since the diagram has genus $1$, it is straightforward to compute the multiplication maps for $\cfa^-(p,i)$. By tensoring with $\cfdhat(K, (q-i)/p)$ as in Section \ref{tensor}, we can compute $\hfkhat(K_{p,q})$ and $\tau(K_{p,q})$.


\section{\texorpdfstring{The tensor product $\cfam \boxtimes \cfdhat$}{The tensor product of CFA and CFD}} \label{tensor}

We now compute the gradings of the tensor product in $(N, A')$ notation, and list the differentials. Then we find the shifting constant $c$.

Since $\cfdhat(K,n)$ splits as a direct sum of squares and a staircase, its tensor product with $\cfam(Y_{p,1})$ splits as a direct sum of the tensor products of $\cfam(Y_{p,1})$ with each square and the staircase too. For this reason, we refer to the corresponding direct summands of the tensor product as squares and a staircase too.

Each of the $c_{t+\tau(K)}$ squares in level $t$ is graded by
\begin{align*}
gr(ax_1) &= (t,-p t)\\
gr(ax_2) &= (-1+t,p-p t)\\
gr(ax_3) &= (t,-p t)\\
gr(ax_4) &= (1+t,-p-p t)\\
gr(b_k y_1) &= (-2 k+2 k \tau-k n-k^2 n+t+2 k t, k+k n p-p t)\\
gr(b_{2p-i-2} y_1)  &= (-3-\! 2 i + 2 \tau + 2 i \tau-\!  2 n- \! 3 i n-\!  i^2 n+3 t+2 i t, p +n p+i n p-\!  p t)\\
gr(b_k y_2)  &= (1-2 k-2 \tau+2 k \tau+k n-k^2 n-t+2 k t, k-n p+k n p-p t)\\
gr(b_{2p-i-2} y_2)  &= (-2-2 i+2 i \tau-i n-i^2 n+t+2 i t, p+i n p-p t)\\
gr(b_k y_3)  &= (1+2 k \tau-k n-k^2 n+t+2 k t, k-p+k n p-p t)\\
gr(b_{2p-i-2} y_3)  &= (-2 l+2 i \tau-2 n-3 i n-i^2 n+3 t+2 i t, n p+i n p-p t)\\
gr(b_k y_4) &= (2 l+2 k \tau+k n-k^2 n-t+2 k t, k-p-n p+k n p-p t)\\
gr(b_{2p-i-2} y_4)  &= (-1+2 i \tau-i n-i^2 n+t+2 i t, i n p-p t),
\end{align*}
 where $1\leq k\leq p-1$ and $0\leq i\leq p-2$.

  Matching up the coefficient maps of $\cfdhat(K,n)$ with the multiplication maps in the beginning of this Section, we see that the non-trivial  differentials on each square are
   \begin{align*}
     \bdy(ax_1)  &= b_{2p-2} y_4 + U b_1y_2 + U^p ax_2 \quad  &    \bdy(b_i y_1) &=  U^{p-i} b_{2p-i-1} y_1 + U b_{i+1}y_2 \\
    \bdy(ax_2)  &= b_{2p-2} y_2    & \bdy(b_{p-1} y_1) &= U b_p y_1 & \\
   \bdy (ax_4)  &= U^p ax_3        &  \bdy(b_{2p-i-1} y_1) &=   b_{2p-i-2} y_2 \\
\bdy(b_k y_j) &=  U^{p-k} b_{2p-k-1} y_j
\end{align*}
 where $1\leq i \leq p-2$, $j = 2, 3, 4$ and $1 \leq k\leq p-1$.
 
The gradings and differentials on the staircase, including the diagonal string connecting $a_1$ and $a_{2|\tau|+1}$, depend on $\tau(K)$:

 {\bf{\bigskip}{\noindent}{\underline{Case 1:}}} If $\tau(K)\leq 0$, the gradings on the staircase are 
\begin{align*}
gr(au_{2t+1}) &= (2 t, -2 p t)\\
gr(au_{2t}) &= (-1+2 t, p-2 p t)\\
gr(b_k v_{2t+1}) &= (-2 \tau+2 k \tau+k n-k^2 n-2 t+4 k t, k+k n p - n p - p -2 p t) \phantom{\hspace {10pt}}\\
gr(b_k v_{2t}) &= (-2 k+2 k \tau-k n-k^2 n+2 t+4 k t, k+k n p-2 p t)\\
 gr(b_{2p-i-2} v_{2t+1}) &= (-1+2 i \tau-i n-i^2 n+2 t+4 i t, i n p-2 p t)\\
gr(b_{2p-i-2} v_{2t}) &= (-3-2 i+2 \tau+2 i \tau-2 n-3 i n-i^2 n+6 t+4 i t, \\
& \qquad p+n p+i n p-2 p t)
\end{align*}
For  $ m = 2\tau(K) - n > 0$, 
\begin{align*}
gr(b_{2p-i-2}\mu_{j+1}) &= (-1+2j+2ij- 2\tau- 2i\tau   - in- i^2n,  -jp + 2 p \tau + inp) \phantom{\hspace {1cm}}\\
gr(b_k\mu_{j+1}) &= (2jk - 2k \tau + kn- k^2n , k - p - j p  +2p \tau - np + knp),
\end{align*}
and for $m<0$,
\begin{align*}
gr(b_{2p-i-2}\mu_{j+1}) &= (-2  - 2i - 2j  -2ij- 2\tau - 2i\tau   - in- i^2n,  p + jp   + 2p \tau + inp) \\
gr(b_k\mu_{j+1}) &= (1- 2k -2jk-2k\tau  + kn-  k^2n ,  k+ j p  +2p \tau- np + knp  ).
\end{align*}
When $\tau(K) = 0$ and $m\leq 0$, we also have 
\begin{align*}
gr(b_k\epsilon_1) &= (k n-k^2 n, k-p-n p+k n p)\phantom{\hspace {4.5cm}} \\
gr(b_k\epsilon_2) &= (1+k n-k^2 n, k-p-n p+k n p)\\
gr(b_{2p-i-2}\epsilon_1) &= (-1-i n-i^2 n, i n p)\\
gr(b_{2p-i-2}\epsilon_2) &= (-i n-i^2 n, i n p).
\end{align*}

{\bf{\bigskip}{\noindent}{\underline{Case 2:}}} If $\tau(K) > 0$, the gradings on the staircase are
\begin{align*}
gr(au_{2t+1}) &= (-2 t, 2 p t)\\
gr(au_{2t}) &= (1-2 t, -p+2 p t)\\
gr(b_k v_{2t+1}) &= (1-\! 2 k-\! 2 \tau+2 k \tau+k n-k^2 n+2 t-4 k t, k-n p+k n p+2 p t)\\
gr(b_k v_{2t}) &= (1+2 k \tau-k n-k^2 n-2 t-4 k t, k-p+k n p+2 p t)\\
gr(b_{2p-i-2} v_{2t+1}) &= (-2-2 i+2 i \tau-i n-i^2 n-2 t-4 i t, p+i n p+2 p t)\\
gr(b_{2p-i-2}v_{2t}) &= (2 \tau+2 i \tau-2 n-3 i n-i^2 n-6 t-4 i t, n p+i n p+2 p t)
\end{align*}
 The gradings on all $b_i\mu_j$  are given by the same formula as in Case 1.

Next, we list the non-trivial differentials.
\noindent
When $\tau<0$ and  $m>0$, the non-trivial differentials are
\begin{align*}
\bdy(au_1) &= b_{2p-2}v_1  & \bdy(b_k v_s) &=  U^{p-k} b_{2p-k-1} v_s  \phantom{\hspace{1.6cm}}\\
 \bdy(au_{2t+1}) &= b_{2p-2}v_{2t+1} + U^p au_{2t} \phantom{\hspace{0.8cm}} & \bdy(b_k \mu_j) &=  U^{p-k} b_{2p-k-1}\mu_j    \\
 \bdy(au_{2|\tau|+1}) &= b_{2p-2}\mu_1 + U^p au_{2|\tau|} &    
\end{align*}
When $\tau<0$ and  $m=0$ the non-trivial differentials are
\begin{align*}
 \bdy(au_1) &= b_{2p-2}v_1  &   \bdy(au_{2|\tau|+1}) &= b_{2p-3}v_1 + U^p au_{2|\tau|} \phantom{\hspace{1.75cm}} \\
\phantom{\hspace{.3cm}}  \bdy(au_{2t+1}) &= b_{2p-2}v_{2t+1} + U^p au_{2t}  \phantom{\hspace{.5cm}} &
  \bdy(b_k v_s) &=  U^{p-k} b_{2p-k-1} v_s  
  \end{align*}
When $\tau<0$ and  $m<0$ the non-trivial differentials are
\begin{align*}
\bdy(au_1) &= b_{2p- 2}v_1  & \bdy(b_k \mu_j) &=  U^{p-k} b_{2p-k-1}\mu_j   \\
  \bdy(au_{2t+1}) &= b_{2p- 2}v_{2t+1} + U^p au_{2t}   &  \bdy( b_i\mu_{|\mspace{-1.0mu} m \mspace{-1.0mu} |} ) & = U b_{i+1}v_1 + U^{p-i}b_{2p-i- 1}\mu_{|\mspace{-1.0mu} m \mspace{-1.0mu} |}  \\
 \bdy(au_{2|\mspace{-1.0mu} \tau \mspace{-1.0mu} |+1}) &= U^p au_{2|\mspace{-1.0mu}\tau \mspace{-1.0mu} |}   & \bdy( b_{p- 1}\mu_{|\mspace{-1.0mu} m \mspace{-1.0mu} |}) & = U b_p\mu_{|\mspace{-1.0mu} m \mspace{-1.0mu} |} \\
\bdy(b_k v_s) &=  U^{p-k} b_{2p-k-1} v_s   &   \bdy( b_{2p-i- 1} \mu_{|\mspace{-1.0mu} m \mspace{-1.0mu} |}) & = b_{2p-i-2}v_1
\end{align*}

\noindent
When $\tau = 0$ and $m>0$, the non-trivial differentials are
\begin{align*}
 \phantom{\hspace{.5cm}} \bdy(au_1) &= b_{2p-2}\mu_1 & \phantom{\hspace{3cm}} \bdy(b_k \mu_j) &=  U^{p-k} b_{2p-k-1}\mu_j  \phantom{\hspace{1.5 cm}} 
\end{align*}
When $\tau = 0$ and $m=0$ the non-trivial differentials are
\begin{align*}
 \bdy(au_1) &= b_{2p-2}\epsilon_1  &   \bdy( b_{p-1}\epsilon_2) & = b_{p-1}\epsilon_1 + U b_p\epsilon_2 \\
\bdy(b_k \epsilon_1) &=  U^{p-k} b_{2p-k-1} \epsilon_1     &  \bdy( b_p\epsilon_2) & = b_p\epsilon_1  \\
 \bdy( b_i\epsilon_2) & =  b_i\epsilon_1 + U b_{i+1}\epsilon_1+ U^{p-i}b_{2p-i-1}\epsilon_2  &  \bdy( b_{2p-i-1}\epsilon_2) & =  b_{2p-i-1}\epsilon_1 + b_{2p-i-2}\epsilon_1 
\end{align*}
When $\tau = 0$ and $m<0$ the non-trivial differentials are
\begin{align*}
\bdy(au_1) &= b_{2p-2}\epsilon_1  &  \bdy(b_k \mu_j) &=  U^{p-k} b_{2p-k-\mspace{-1.0mu}1}\mu_j  \\
\bdy(b_k \epsilon_1) &=  U^{p-\mspace{-1.0mu}k} b_{2p-k-\mspace{-1.0mu} 1} \epsilon_1  &  \bdy( b_i\mu_{|\mspace{-1.0mu} m \mspace{-1.0mu} |}) & = U b_{i+\mspace{-1.0mu}1}\epsilon_1 + U^{p-\mspace{-1.0mu}i}b_{2p-i-\mspace{-1.0mu}1}\mu_{|\mspace{-1.0mu} m \mspace{-1.0mu} |} \\
 \bdy( b_k\epsilon_2) & = b_k\epsilon_1 + U^{p-\mspace{-1.0mu}k}b_{2p-\mspace{-1.0mu}k-\mspace{-1.0mu}1}\epsilon_2   & \bdy( b_{p-\!1}\mu_{|\mspace{-1.0mu} m \mspace{-1.0mu} |}) & = U b_p\mu_{|\mspace{-1.0mu} m \mspace{-1.0mu} |}  \\
\bdy( b_{2p-\mspace{-1.0mu}k-\mspace{-1.0mu}1}\epsilon_2) & = b_{2p-k-\mspace{-1.0mu} 1}\epsilon_1 & \hspace{-8pt} \bdy( b_{2p-i-\mspace{-1.0mu} 1} \mu_{|\mspace{-1.0mu} m \mspace{-1.0mu} |}) & = b_{2p-i-2}\epsilon_1  
\end{align*}

\noindent
When $\tau >0$ and  $m>0$, the non-trivial differentials are
\begin{align*}
\bdy(au_{2t}) &= b_{2p-2}v_{2t-1}+ U^pau_{2t+1}  &  \bdy(b_{2p-i-1}v_{2\tau}) &= b_{2p-i-2}\mu_1\\ 
\bdy(au_{2\tau}) &= b_{2p-2}v_{2\tau-1}+ Ub_1\mu_1+ U^p au_{2\tau+1}  &   \bdy(b_{p-1} v_{2\tau}) &= U b_p v_{2\tau} \\
\bdy(au_{2\tau+1}) &= b_{2p-2}\mu_1 & \bdy(b_k v_s) &= U^{p-k}b_{2p-k-1}v_s\\
\bdy(b_i v_{2\tau}) &= U b_{i+1}\mu_1+  U^{p-i}b_{2p-i-1}v_{2\tau}& \bdy(b_k \mu_j) &= U^{p-k}b_{2p-k-1}\mu_j \phantom{\hspace{1cm}}
\end{align*}
When $\tau >0$ and  $m=0$ the non-trivial differentials are
\begin{align*}
 \phantom{\hspace{.5cm}} \bdy(au_{2t}) &= b_{2p-2}v_{2t-1} + U^p au_{2t+1}  \phantom{\hspace{1cm}} &
\bdy(b_k v_s) &=  U^{p-k} b_{2p-k-1} v_s  \phantom{\hspace{1cm}}
 \end{align*}
When $\tau >0$ and  $m<0$ the non-trivial differentials are
\begin{align*}
 \bdy(au_{2t}) &= b_{2p-2}v_{2t-1} + U^p au_{2t+1}  \phantom{\hspace{2.5cm}} &
\bdy(b_k \mu_j) &=  U^{p-k} b_{2p-k-1} \mu_j  \phantom{\hspace{.5cm}}   \\
 \bdy(b_k v_s) &=  U^{p-k} b_{2p-k-1} v_s  
\end{align*}

The indices vary as follows
\begin{align*}
 & 1\leq s \leq \left\{ \begin{array}{ll} 2|\tau|-1 &  \text{ if } m > 0, \tau>0     \\
2|\tau| &  \text{ otherwise }
 \end{array} \right.     & 1\leq  j \leq \left\{ \begin{array}{ll} |m|-1 &  \text{ if } m < 0, \tau\leq 0     \\
|m| & \text{ otherwise }
 \end{array} \right. \\
&1\leq  t \leq \left\{ \begin{array}{ll} |\tau|  & \hspace{6pt} \text{ if } m<0, \tau >0     \\
|\tau| -1 & \hspace{6pt} \text{ otherwise }
 \end{array} \right.  & 
\begin{array}{l} 1\leq  i\leq p-2    \\
 1\leq k\leq p-1 
 \end{array}\hspace{3.52cm}
\end{align*}

Recall that the absolute $N$ grading is obtained by requiring that the homology $H_{\ast}\left(g\cfkm(K_{p, pn+1})/U=1\right)\cong \F_2$ lives in $N$ grading $0$, see the discussion in \cite[Section 11.3]{bfh2}, specifically  Equation 11.17 and the paragraph preceding it. Set $U=1$ above. 

 When $\tau> 0$, $au_1$ splits as a direct summand of the chain complex, so it represents  $H_{\ast}\left(g\cfkm(K_{p, pn+1})/U=1\right)\cong \F_2$, implying that $N(au_1) = 0$. 

When $\tau < 0$, the subcomplex $D$ generated by $au_1, b_1v_1,$ and $b_{2p-2}v_1$ splits, and 
$$H_{\ast}(D) = \frac{\ker(D)}{\text{im}(D)} = \langle au_1 + b_1v_1\rangle,$$
so $N(au_1 + b_1v_1)= 0$.

When $\tau = 0$ and $m>0$, the subcomplex $D$ generated by $au_1, b_1\mu_1,$ and $b_{2p-2}\mu_1$ splits, and 
$$H_{\ast}(D) = \frac{\ker(D)}{\text{im}(D)} = \langle au_1 + b_1\mu_1\rangle,$$
so $N(au_1 + b_1\mu_1)= 0$.

When $\tau = 0$ and $m=0$, The image and kernel  of the differential are
\begin{align*}
\text{im} &  =   \langle  b_1 \epsilon_1 + b_2 \epsilon_1 + b_{2p-2} \epsilon_2,  b_2 \epsilon_1 + b_3 \epsilon_1 + b_{2p-3} \epsilon_2 , \ldots,  b_{p-2} \epsilon_1 + b_{p-1} \epsilon_1 + b_{p+1} \epsilon_2 , \\
  & \qquad b_{p-1} \epsilon_1 + b_p \epsilon_2,   b_p\epsilon_1, b_{p+1}\epsilon_1, \ldots, b_{2p-2}\epsilon_1 \rangle \\
  \ker & =  \textrm{im}\oplus \langle au_1+b_1\epsilon_1\rangle,
 \end{align*}
 so $au_1+b_1\epsilon_1$ survives in homology, implying that $N(au_1+b_1\epsilon_1) = 0$. 

When $\tau = 0$ and $m<0$, the subcomplex $D$ generated by $au_1$, $b_1\epsilon_1$, $b_1\epsilon_2$, $b_{2p-2}\epsilon_1$, and $b_{2p-2}\epsilon_2$
splits, and 
$$H_{\ast}(D) = \frac{\ker(D)}{\text{im}(D)} = \frac{\langle au_1 + b_1\epsilon_1, b_1\epsilon_1 + b_{2p-2}\epsilon_2, b_{2p-2}\epsilon_1  \rangle}{\langle b_1\epsilon_1 + b_{2p-2}\epsilon_2, b_{2p-2}\epsilon_1  \rangle},$$
 so $N(au_1 + b_1\epsilon_1)= 0$.
 
In each case $N(au_1) = 0$,  so the first component of the grading provided earlier in this section is, in fact, the absolute $N$ grading.

Next, we find the absolute Alexander grading, by requiring that the Euler characteristic of $g\cfkhat$ is the symmetrized Alexander polynomial. The formula 
$$\Delta_{K_{p, pn+1}}(t) = \Delta_K(t^p)\Delta_{T_{p, pn+1}}(t)$$
implies that if the degree of the symmetrized $\Delta_K(t)$ is $d$, then the degree of the symmetrized $\Delta_{K_{p, pn+1}}(t)$ is 
$$ pd+\frac{(|p|-1)(|pn+1|-1)}{2} = \left\{ \begin{array}{ll} pd+\frac{np(p-1)}{2} & \quad \text{ if } n\geq 0     \\
pd-\frac{np(p-1)}{2} - p + 1 & \quad \text{ otherwise, }
 \end{array} \right.   $$
so we look for the highest relative Alexander grading in which generators survive when taking Euler characteristic, and shift to make it equal this degree. 

If $d>|\tau|$, then this highest grading is realized by the following generators  coming from all squares in level $d - \tau - 1$:
\begin{align*}
\text{ If } n>0, \quad & b_py_1 \\
\text{ If } n= 0, \quad & ax_2, b_py_1, b_{p+1}y_1, \ldots, b_{2p-2}y_1, b_py_2, b_{p+1}y_2, \ldots, b_{2p-2}y_2 \\
\text{ If } n< 0,  \quad & b_1y_2.
\end{align*}
In each case, the contribution of each square to $\chi(g\cfkhat)$ is rank $1$ and in Maslov grading $d-\tau$ mod $2$.

If $d<|\tau|$ or if $K$  is the unknot, then the highest grading is realized by staircase generators:
\begin{align*}
\text{ If }\tau>0,  n>0, \quad & b_p v_{2\tau} \\
\text{ If } \tau>0, n= 0, \quad & au_{2\tau+1}, b_pv_{2\tau}, b_{p+1}v_{2\tau}, \ldots, b_{2p-2} v_{2\tau}, b_p\mu_1, b_{p+1} \mu_1, \ldots, b_{2p-2}\mu_1 \\
\text{ If }\tau \geq 0, n< 0,  \quad & b_1 \mu_1\\
\text{ If }\tau\leq 0,  n>0, \quad & b_p \mu_{|m|} \\
\text{ If } \tau=0, n= 0, \quad & a u_1\\
\text{ If } \tau<0, n= 0, \quad & au_1, b_p\mu_{|m|}, b_{p+1}\mu_{|m|}, \ldots, b_{2p-2}\mu_{|m|}, b_p v_1, b_{p+1} v_1, \ldots, b_{2p-2}v_1 \\
\text{ If }\tau<0, n< 0,  \quad & b_1 v_1.
\end{align*}
In each case, the contribution of the staircase to $\chi(g\cfkhat)$ is rank $1$ and in Maslov grading $0$ mod $2$.

If $d = |\tau|$, then the highest grading is realized by the listed generators from squares in level $d-\tau - 1$ combined with the listed staircase generators. The rank $1$ contribution  to $\chi(g\cfkhat)$ has Maslov grading $0$ mod $2$ both for the staircase, and for each square, so there are no further cancelations.

In each of these cases,  we need to shift by the constant   $c= -p\tau - np(p-1)/2$. 
Together with the fact that $N=M-2A$, we now have a complete description of  $g\cfkm(K_{p, pn+1})$.  Setting $U=0$ in the above differentials gives $g\cfkhat(K_{p, pn+1})$.

For completeness, we include a list of the generators of $\hfkhat(K_{p, pn+1})$. Throughout, the indices will vary as follows:
\begin{align*}
p+1& \leq i \leq 2p-2 &\\
 1& \leq k\leq p &\\
  1& \leq j\leq |m|-1 &\\
1& \leq  t \leq \tau   &\textrm{ if } \tau >0 \phantom{.}   \\
0& \leq  t \leq  |\tau| -1   &\textrm{ if } \tau <0. 
 \end{align*}

For each square direct summand, all generators survive in homology except  
\begin{displaymath}
 \hspace{-23pt}\begin{array}{l@{\hspace{5em}}l}
ax_1 &  b_{i-1}y_2 \\
ax_2  & b_{2p-2}y_2  \\
  b_i y_1  &   b_{2p-2}y_4
\end{array}
\end{displaymath}

The staircase summand depends on $\tau$ and the framing:

If $\tau>0, m>0$, all generators survive except 
\begin{displaymath}
\hspace{-34pt} \begin{array}{l@{\hspace{2.1em}}l}
au_{2t}         &   b_i v_{2\tau} \\
au_{2\tau+1}  &  b_i\mu_1\\
 b_{2p-2}v_{2t-1}  & b_p \mu_1
\end{array}
\end{displaymath}

 If $\tau>0, m\leq 0$, all generators survive except 
\begin{displaymath}
\hspace{-20pt} \begin{array}{l@{\hspace{4.7em}}l}
\quad au_{2t}     &  
b_{2p-2}v_{2t-1} 
\end{array}
\end{displaymath}

If $\tau < 0, m>0$, all generators survive except 
\begin{displaymath}
\hspace{-9pt} \begin{array}{l@{\hspace{3.1em}}l}
 au_{2t+1} &       b_{2p-2}v_{2t+1} 
\\
 au_{2|\tau|+1}    &       b_{2p-2}\mu_1
\end{array}  
\end{displaymath}

If $\tau < 0, m=0$, all generators survive except 
\begin{displaymath}
\hspace{-10pt} \begin{array}{l@{\hspace{3.1em}}l}
 au_{2t+1}   &     b_{2p-2}v_{2t+1}
\\
au_{2|\tau|+1}  &     b_{2p-3}v_1
\end{array}  
\end{displaymath}

If $\tau < 0, m<0$, all generators survive except 
\begin{displaymath}
\hspace{-28pt} \begin{array}{l@{\hspace{2.1em}}l}
 au_{2t+1}& b_i \mu_{|m|}   
\\
 b_{2p-2}v_{2t+1} &  b_{i-1} v_1 
\end{array}  
\end{displaymath}

If $\tau=0$ and $m>0$, all generators survive except 
\begin{displaymath}
\hspace{-22pt} \begin{array}{l@{\hspace{5em}}l}
au_1 & 
b_{2p-2}\mu_1
\end{array}  
\end{displaymath}

If $\tau=0$ and $m=0$, the homology has rank one and is represented by 
\begin{itemize}
\setlength{\itemindent}{10em}
\item $au_1+ \sum_{s=p}^{2p-2}b_s\epsilon_2$
\end{itemize}

If $\tau=0$ and $m<0$, the homology is generated by 
\begin{itemize}
\setlength{\itemindent}{10em}
\item $au_1+b_{2p-2}\epsilon_2$
\item  $b_k\mu_{|m|}$
\item $b_{i-1}\epsilon_2 + b_i\mu_{|m|}$
\item $b_k\mu_j$
\item $b_i\mu_j$
\end{itemize}

This completes the description of $\hfkhat(K_{p, pn+1})$.


\section{\texorpdfstring{$\tau$ of the cable}{Tau of the cable}}\label{tau}

In \cite{tau}, Ozsv\'ath and Szab\'o define a  concordance invariant $\tau(K)$ arising from the Alexander filtration on $\cfhat(S^3)$. Alternatively, $\tau$ can be defined in terms of the associated graded object $\hfkm(K)$
 by
$$\tau(K) =- \max \{s | \forall d\geq 0, U^d \hfkm(K,s)\neq 0\}.$$
(see Lemma A.2 of  \cite{oszt}).

We do not need to fully compute the homology of $g\cfkm(K_{p, pn+1})$. It is enough to observe that $U^p\hfkm$ vanishes for each direct summand of the tensor product coming from a square in the $D$ module. Thus, $\tau$ of the $(p, pn+1)$ cable only depends on the staircase summand, which agrees with $g\cfkm$ of  the $(2,2\tau+ \textrm{sgn}(\tau))$-torus knot, and hence 
$$\tau(K_{p, pn+1}) = \tau((T_{2, 2\tau+ \textrm{sgn}(\tau)})_{p, pn+1}).$$
Since $|\tau(T_{2, 2\tau+ \textrm{sgn}(\tau)})| = g(T_{2, 2\tau+ \textrm{sgn}(\tau)})$, we can use \cite[Theorem 1.2]{mc2} to  determine $\tau((T_{2, 2\tau+ \textrm{sgn}(\tau)})_{p, pn+1})$. 

Alternatively, one can work out the computation independently by using the complex $g\cfkm(K_{p, pn+1})$ provided in Section \ref{tensor}. 
For example, when $\tau(K) < 0$ and $m>0$, the staircase summand splits further, and all direct summands vanish in $U^{2p}\hfkm$, except for the one generated by $au_1, b_1v_1$, and $b_{2p-2}v_1$. The homology of this summand is generated by $U^{p-1}au_1+ b_1v_1$, and survives in all powers of $U$. The Alexander gradings are 
$A(U^{p-1}au_1) = A(au_1) -2(p-1) = -p\tau -\frac{np(p-1)}{2} -2p +2$
 and $A(b_1v_1) =   -p\tau -\frac{np(p-1)}{2}-p+1$, so the Alexander filtration level of $U^{p-1}au_1+ b_1v_1$ is $ -p\tau -\frac{np(p-1)}{2}-p+1$, hence $\tau(K_{p, pn+1}) = -(-p\tau -\frac{np(p-1)}{2}-p+1)$. The computation in the remaining cases goes the same way. The only generator in $\hfkm$ that survives in all $U$-powers is $U^{p-1}au_1+ b_1v_1$ if $\tau<0$, $U^{p-1}au_1+ b_1\mu_1$ if $\tau=0$ and $n<0$, in which cases 
$$\tau(K_{p, pn+1})= p\tau (K) + \frac{np(p-1)}{2}+p-1 ,$$
 and $au_1$ if $\tau=0$ and $n\geq 0$, or if $\tau>0$, in which cases
$$\tau(K_{p, pn+1}) = p\tau (K) + \frac{np(p-1)}{2}.$$
This completes the proof of Theorem \ref{main}.
Observe that this agrees with the results in \cite{mc2}, where Hedden computes $\tau$ of  $(p, pn+1)$-cables for sufficiently large $|n|$. 


\section{Proof of corollaries}\label{cors}

We prove the two corollaries stated at the end of Section \ref{intro}.
\begin{proof}[Proof of Corollary \ref{pq}]

Since $K_{1,q} = K$ for all $q$, the result for $p=1$ is a tautology. 

Fixing $K$ and $p>1$, Van Cott \cite{cvc} defines the function 
$$h(q) = \tau(K_{p,q})- \frac{p-1}{2}q$$
with domain all integers relatively prime to $p$,
and proves that $h$ is non-increasing.

By Theorem \ref{main}, $h(pn+1)$ is constant as a function of $n$  for $\tau=0$ and $n\geq 0$, or $\tau > 0$, and is given by
$$h(pn+1) = p\tau (K) - \frac{p-1}{2},$$
 implying that for $\tau=0$ and  any $q> 0$, or $\tau > 0$ and any $q$, 
 $$h(q) = p\tau (K) - \frac{p-1}{2}.$$
 We see that
\begin{align*}
\tau(K_{p,q}) &= h(q) + \frac{p-1}{2}q\\
&= p \tau(K) + \frac{(p-1)(q-1)}{2}.
\end{align*}

It is shown that $\tau$ changes sign under reflection\cite{tau}. Thus, since $\bar{K_{p,q}} = \bar K_{p, -q}$, it follows that for $\tau(K) =0$ and $q<0$, or $\tau(K) < 0$ we have   \belowdisplayskip=-12pt
\begin{align*} 
\tau(K_{p,q}) & = -\tau(\bar K_{p,-q}) \\
& = -\left(p \tau(\bar K) + \frac{(p-1)(-q-1)}{2}\right)\\
 & = p\tau(K) + \frac{(p-1)(q+1)}{2}.  
 \end{align*} \qedhere    
\end{proof}

\begin{proof}[Proof of Corollary \ref{genus}]

Since $\tau(K)=g_4(K)\geq 0$ and $q>0$, Corollary \ref{pq} implies that
$$\tau(K_{p,q}) =  p \tau(K) + \frac{(p-1)(q-1)}{2}.$$
 Substituting $g_4(K)$ for $\tau(K)$ we see that 
$$\tau(K_{p,q}) =  p g_4(K) + \frac{(p-1)(q-1)}{2}.$$
On the one hand, we know that
$$g_4(K_{p,q}) \leq  p g_4(K) + \frac{(p-1)(q-1)}{2},$$
since we can construct a surface for $K_{p,q}$ in the four-ball by connecting $p$ parallel copies of the surface for $K$ via $(p-1)q$ twisted bands. Thus $g_4(K_{p,q})\leq \tau(K_{p,q})$.
On the other hand, $g_4(K_{p,q})\geq \tau(K_{p,q})$ for any knot, implying the desired result.
\end{proof}

 
 \section{An example} \label{ex}
 
 The author programmed the results of this paper into \emph{Mathematica} \cite{m} to be able to compute specific examples. The program takes $\Delta_K(t), \tau (K), p,$ and $n$ as input, and outputs $\hfkhat(K_{p, pn+1})$. We use the program to compute $\hfkhat$ of the $(5, 16)$-cable of the knot $11n50$. We include the relevant data for $11n50$ for the reader's convenience:
 \begin{align*}
 \Delta_{11n50}(t) &= 2t^{-2}- 6 t^{-1} + 9 - 6 t + 2t^2,\\
 \tau(11n50) &= 0.
 \end{align*}
 Note that here $ p = 5$ and $n = 3$.

 We describe $\hfkhat$ of the cable as a polynomial, where the coefficient of $x^A y^M$ is the rank of $\hfkhat$ in Alexander grading $A$ and Maslov grading $M$: 
 \begin{align*}
&  2x^{-40} y^{-78}+2 x^{40} y^2+ 2x^{-39} y^{-77}+2 x^{39} y + 4x^{-35} y^{-69} + 4 x^{35} y+4x^{-34} y^{-68}\\
&+4 x^{34} +5x^{-30} y^{-60}+  5x^{30}+5x^{-29} y^{-59}+5 x^{29}y^{-1}+x^{-25} y^{-52}
+ 2x^{-25} y^{-51}\\
&+x^{25}y^{-2}+2 x^{25}y^{-1}
   + x^{-24} y^{-51}+ 4x^{-24}y^{-50} +x^{24}y^{-3}+ 4 x^{24}y^{-2}+ 2x^{-23} y^{-49}\\
   &+2x^{23}y^{-3}+3x^{-20} y^{-44} +2x^{-20} y^{-43}+ 3 x^{20}y^{-4}+ 2x^{20} y^{-3}+ 5x^{-19} y^{-43} \\
   &+ 5 x^{19}y^{-5}+ 4 x^{-18} y^{-42}
   + 4 x^{18}y^{-6} + 2x^{-15} y^{-37}+ 3 x^{-15} y^{-36}+ 2 x^{15} y^{-7}\\
   &+3 x^{15}y^{-6}+ 4x^{-14} y^{-36}+ 4 x^{14}y^{-8}
   +5 x^{-13} y^{-35} +5 x^{13}y^{-9} + 3x^{-10} y^{-30}\\
   &+ 2x^{-10} y^{-29}+ 3 x^{10}y^{-10}+ 2 x^{10}y^{-9}+ 2 x^{-9} y^{-29}+2 x^9 y^{-11} + x^{-8} y^{-29} \\
   &+ 4 x^{-8} y^{-28}+ x^8 y^{-13}+ 4 x^8 y^{-12}+ 2 x^{-7} y^{-27}+2 x^7 y^{-13}+ 3x^{-5} y^{-24}\\
   &+2 x^{-5}  y^{-23}+3 x^5 y^{-14}+ 2 x^5 y^{-13}+5 x^{-3} y^{-23} + 5
   x^3 y^{-17}+ 4 x^{-2} y^{-22}\\
   &+ 4 x^2 y^{-18} + 2 y^{-19}+ 3 y^{-18}
   \end{align*}
   
We also plot the result on the $(A,M)$-axis (without marking the rank at each coordinate): 
\begin{figure}[h]
\centering
 \includegraphics[scale=.9]{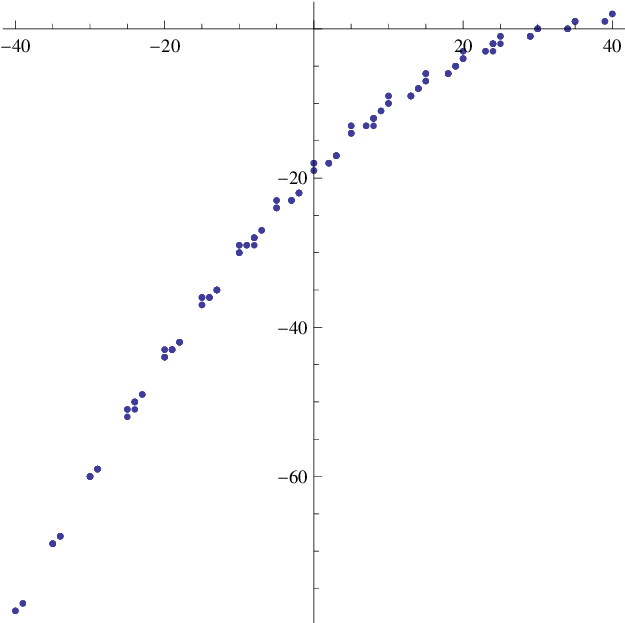}
  \caption{$\hfkhat$ of the $(5,16)$-cable of the knot $11n50$}
  \label{fig:516}
\end{figure}

\bibliographystyle{/Users/inapetkova/Documents/work/hamsplain2}

\bibliography{master}

\end{document}